\def\dashmapsto{\mapstochar\dashrightarrow}
\newcommand{\PP}{\mathbb{P}}
\newcommand{\N}{\mathbb{N}}
\newcommand{\Z}{\mathbb{Z}}
\newcommand{\R}{\mathbb{R}}
\newcommand{\C}{\mathbb{C}}
\newcommand{\V}{\mathcal{V}}
\newcommand{\I}{\mathcal{I}}
\newcommand{\J}{\mathcal{J}}
\newcommand{\x}{\mathbf{x}}
\newcommand{\p}{\mathbf{p}}
\newcommand{\cL}{\mathcal{L}}
\newcommand{\B}{\mathcal{B}}
\newcommand{\CC}{\mathcal{C}}
\newcommand{\cP}{\mathcal{P}}
\newcommand{\ol}{\overline}
\newcommand{\ii}{\textnormal{i}}
\DeclareMathOperator{\In}{in}
\DeclareMathOperator{\inv}{inv}
\DeclareMathOperator{\supp}{supp}
\newtheorem{thm}{Theorem}[section]
\newtheorem{lem}[thm]{Lemma}
\newtheorem{cor}[thm]{Corollary}
\newtheorem{prop}[thm]{Proposition}
\theoremstyle{definition}
\newtheorem{definition}[thm]{Definition}
\newtheorem{ex}[thm]{Example}
\newtheorem{rem}[thm]{Remark}
\title{Semi-inverted linear spaces and an analogue of the broken circuit complex}
\author{Georgy Scholten}
\address{North Carolina State University, Deptartment of Mathematics,Raleigh, NC 27695, USA}
\email{ghscholt@ncsu.edu}
\author{Cynthia Vinzant}
\address{North Carolina State University, Deptartment of Mathematics,Raleigh, NC 27695, USA}
\email{clvinzan@ncsu.edu}
\date{\today}
\begin{document}

\begin{abstract}
The image of a linear space under inversion of some coordinates is an affine variety whose structure is governed by an underlying 
hyperplane arrangement. In this paper, we generalize work by Proudfoot and Speyer to show that circuit polynomials form a 
universal Gr\"obner basis for the ideal of polynomials vanishing on this variety. The proof relies on degenerations to the Stanley-Reisner ideal of 
a simplicial complex determined by the underlying matroid, which is closely related to the external activity complex defined by Ardila and Boocher. 
If the linear space is real, then the semi-inverted linear space 
is also an example of a hyperbolic variety, meaning that all of its intersection points with a large family of linear spaces are real. 
\end{abstract}

\maketitle

\section{Introduction}
In 2006, Proudfoot and Speyer showed that the coordinate ring of a reciprocal linear space (i.e. the closure of the image of 
a linear space under coordinate-wise inversion) has a flat degeneration into the Stanley-Reisner ring of the broken circuit complex of a matroid \cite{PS}. 
This completely characterizes the combinatorial data of these important varieties, which appear across many areas of mathematics, including 
in the study of matroids and hyperplane arrangements \cite{Terao}, interior point methods for linear programming \cite{DLSV12}, and entropy maximization for log-linear models in statistics \cite{MSUZ14}.  

In this paper we extend the results of Proudfoot and Speyer to the image of a linear space $\cL\subset \C^n$ under inversion of some subset of coordinates.
For $I\subseteq \{1,\hdots, n\}$, consider the rational map  $\inv_I: \C^n \dashrightarrow  \C^n $ defined by 
\[(\inv_I(x))_i =
\begin{cases}
1/x_i  &\text{ if } i\in I\\
 \ \ x_i  &\text{ if } i\not\in I.\\
\end{cases}
\]
Let $\inv_I(\cL)$ denote the Zariski-closure of the image of $\cL$ under this map, which is an affine variety in $\C^n$. One can interpret $\inv_I(\cL)$ as 
an affine chart of the closure of $\cL$ in the product of projective spaces $(\PP^1)^n$, as studied  in \cite{AB16}, or as the projection 
of the graph of $\cL$ under the map $x\dashmapsto \inv_{[n]}(x)$, studied in  \cite{FSW17}, onto complementary subsets of the $2n$ coordinates. 
We give a degeneration of the coordinate ring of $\inv_I(\cL)$
to the Stanley-Reisner ring of a simplicial complex generalizing the broken circuit complex of a matroid. 
This involves constructing a universal Gr\"obner basis for the ideal of polynomials vanishing on $\inv_I(\cL)$.

Let $\C[\x]$ denote the polynomial ring $\C[x_1, \hdots ,x_n]$ and for any $\alpha \in (\Z_{\geq 0})^n$, let 
$\x^{\alpha}$ denote $\prod_{i=1}^n x_i^{\alpha_i}$.  For a subset $S\subseteq[n]$, we will also use $\x^S$ to denote $\prod_{i\in S}x_i$. 
As in \cite{PS}, the \emph{circuits} of the matroid $M(\cL)$ corresponding to $\cL$ give rise to a universal Gr\"obner basis for the 
ideal of polynomials vanishing on $\inv_I(\cL)$. 
We say that a linear form $\ell(x)= \sum_{i\in [n]} a_i x_i$ vanishes on $\cL$ if $\ell(x) = 0$ for all $x\in \cL$.  
The support of $\ell$, $\supp(\ell)$, is $\{i\in [n] : a_i\neq 0\}$. The minimal supports of nonzero linear forms vanishing on $\cL$
are called \emph{circuits} of the matroid $M(\cL)$ and for every circuit $C\subset [n]$, there is a unique (up to scaling) 
linear form $\ell_C= \sum_{i\in C} a_i x_i$ vanishing on $\cL$ with support $C$. To each circuit, we associate the polynomial 
\begin{equation}\label{eq:fC_def}
f_C(\x) \ = \  \x^{C\cap I} \cdot \ell_C(\inv_I(\x)) \ = \ \sum_{i\in C\cap I} a_i \x^{C\cap I\backslash\{ i\}}  +  \sum_{i\in C\backslash I } a_i \x^{C\cap I \cup\{ i\}}.
\end{equation}

\begin{thm}\label{thm:main}
Let $\cL\subseteq \C^n$ be a $d$-dimensional linear space and 
let $\I \subseteq \C[\x]$ be the ideal of polynomials vanishing on $\inv_I(\cL)$. 
Then  $\{f_C : C \text{ is a circuit of }M(\cL)\}$ is a universal Gr\"obner basis for $\I$. 
For $w\in (\R_+)^n$ with distinct coordinates, the initial ideal $\In_w(\I)$ is the Stanley-Reisner 
ideal of the semi-broken circuit complex $\Delta_w(M(\cL), I)$. 
\end{thm}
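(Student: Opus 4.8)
The plan is to generalize Proudfoot and Speyer's analysis of reciprocal linear spaces, tracking the partition $[n]=I\sqcup I^{c}$ throughout. That each $f_C$ lies in $\I$ is immediate: the rational map $\inv_I$ is a birational involution of $\C^{n}$, so it restricts to a birational map from $\cL$ onto $\inv_I(\cL)$ (in particular $\dim\inv_I(\cL)=d$), and for a generic $p\in\cL$ the point $\ell=\inv_I(p)$ has $\ell_i\neq 0$ for all $i\in C\cap I$ and $\ell_C(\inv_I(\ell))=\ell_C(p)=0$; by \eqref{eq:fC_def} this forces $f_C$ to vanish on a dense subset of $\inv_I(\cL)$, hence on all of it. Equally routine is the shape of $\In_w(f_C)$ for $w\in(\R_+)^n$ with distinct coordinates: since $w$ is positive, every monomial $\x^{C\cap I\cup\{i\}}$ arising from an $i\in C\setminus I$ has strictly larger $w$-weight than every monomial $\x^{C\cap I\setminus\{j\}}$ arising from a $j\in C\cap I$, so $\In_w(f_C)$ is the single squarefree monomial on $(C\cap I)\cup\{i^{\ast}\}$, where $i^{\ast}$ is the $w$-largest element of $C\setminus I$ (and on $C\setminus\{\min_w C\}$ in the special case $C\subseteq I$). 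By construction these monomials are the minimal non-faces of $\Delta_w(M(\cL),I)$, so $\langle\In_w(f_C)\rangle$ is its Stanley--Reisner ideal $J$ and we obtain the inclusion $J\subseteq\In_w(\I)$.

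The substance of the theorem is the reverse inclusion $\In_w(\I)\subseteq J$, i.e. that the leading forms $\In_w(f_C)$ already generate all of $\In_w(\I)$. I would prove $\In_w(\I)=J$ by a Hilbert-series count: as $J\subseteq\In_w(\I)$, it suffices to compute the Hilbert series of $\C[\x]/J$ and of the associated graded ring $\C[\x]/\In_w(\I)$ of $\C[\x]/\I$ by two independent routes and check that they agree. For $\C[\x]/J$ one shows that $\Delta_w(M(\cL),I)$ is pure of dimension $d-1$ and shellable, generalizing the classical shellability of the broken-circuit complex — the shelling order and its verification should again come down to circuit elimination in $M(\cL)$ — and then reads the Hilbert series off the $h$-vector. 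For $\C[\x]/\I$ one needs an intrinsic computation. Here it helps that $\inv_I(\cL)$ is invariant under the $\C^{\ast}$-action scaling the coordinates indexed by $I$ by $t$ and those indexed by $I^{c}$ by $t^{-1}$, so that $\I$ (and each $f_C$) is homogeneous for the $\Z$-grading with $\deg x_i=+1$ for $i\in I$ and $\deg x_i=-1$ otherwise; together with a $w$-weighted homogenization this realizes $\In_w(\I)$ as the special fibre of a flat, suitably graded family over $\mathbb A^{1}$ with general fibre $\inv_I(\cL)$, whose Hilbert series I would then compute either by a deletion--contraction recursion on the pair $(M(\cL),I)$ — handling $i\in I$ and $i\notin I$ separately and matching the recursion obeyed by the $f$-vector of $\Delta_w$ — or by realizing $\inv_I(\cL)$ as the affine chart of the closure $\overline{\cL}\subseteq(\PP^1)^{n}$ obtained by inverting the factors indexed by $I$ and restricting the Ardila--Boocher degeneration of $\overline{\cL}$ to that chart. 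A possible shortcut avoids Hilbert series entirely: verify Buchberger's criterion, reducing each $S$-polynomial $S(f_C,f_{C'})$ to zero modulo $\{f_C\}$ via circuit elimination, which would deliver the universal Gröbner basis property for all term orders at once — but the $S$-pair bookkeeping looks more delicate than the numerical argument.

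Once $\In_w(\I)=J$ is known for every $w\in(\R_+)^n$ with distinct coordinates, universality follows by perturbation: for an arbitrary $w_0\in\R^{n}$ pick a generic $w_1$ so that $\In_{w_1}\bigl(\In_{w_0}(\I)\bigr)=\In_{w_0+\epsilon w_1}(\I)$ is one of the Stanley--Reisner ideals already identified and $\In_{w_1}\langle\In_{w_0}(f_C)\rangle$ is the same ideal; since $\langle\In_{w_0}(f_C)\rangle\subseteq\In_{w_0}(\I)$ and the two agree after this further initial degeneration, they have the same Hilbert function and therefore coincide. The main obstacle I anticipate is the intrinsic computation of the Hilbert series of $\C[\x]/\I$. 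A deletion--contraction recursion must handle awkward base cases — loops and coloops of $M(\cL)$, and elements $i\in I$ along which $\inv_I$ is undefined on $\cL$ (where $\inv_I(\cL)$ degenerates, possibly to the empty set) — and one must check that flatness of the homogenized family is preserved under deletion and contraction and that the recursion produced is exactly the one governing the $f$-vector of the semi-broken circuit complex. Establishing shellability of $\Delta_w(M(\cL),I)$ in this semi-inverted generality is the other place where genuinely new combinatorics is needed.
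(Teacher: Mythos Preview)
Your overall architecture is right and matches the paper's: establish the containment $J:=\langle\In_w(f_C)\rangle\subseteq\In_w(\I)$, then force equality by a numerical invariant, and finally deduce universality by perturbation. The differences are in how much numerical information you propose to compute and what you use it for.

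The paper does \emph{not} compute full Hilbert series on both sides. Instead it exploits two facts you do not mention. First, $J$ is radical (being a squarefree monomial ideal) and, since $\Delta_w(M(\cL),I)$ is pure of dimension $d-1$, equidimensional; second, the initial ideal of a prime ideal is equidimensional of the same dimension. Under these hypotheses a single inequality of \emph{degrees}, $\deg(J)\le\deg(\In_w(\I))$, already forces $J=\In_w(\I)$ (this is the paper's Lemma~\ref{lem:EquidimEquality}). Consequently the geometric input needed is only the inequality $D(\cL\backslash i,I\backslash i)+D(\cL/i,I\backslash i)\le D(\cL,I)$, proved by exhibiting the two pieces inside the special fibre of a one-parameter degeneration (Proposition~\ref{prop:DegreeRecurrence}); the matching combinatorial equality for the number of facets comes from the deletion--contraction decomposition of $\Delta_w$ (Theorem~\ref{thm:Complex}) together with the inductive hypothesis. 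Equality in the degree recursion is then a \emph{consequence} of the theorem, not an ingredient. Your plan to match exact Hilbert-series recursions on the geometric side risks circularity here: proving that the degeneration has \emph{exactly} those two components (rather than at least those two) is essentially the theorem.

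Two smaller points. Shellability of $\Delta_w(M(\cL),I)$ is not used in the proof; the paper obtains it afterward as a corollary of identifying $\Delta_w$ with a link in the Ardila--Boocher external activity complex. All that is needed is purity, which follows from the deletion--contraction decomposition. And the $\Z$-grading you propose, with $\deg x_i=-1$ for $i\notin I$, has infinite-dimensional graded pieces (e.g.\ all monomials $x_ix_j$ with $i\in I$, $j\notin I$ lie in degree $0$), so it does not directly support a Hilbert-series argument; the paper instead homogenizes with a new variable $x_0$ to get the standard grading. Your alternative of restricting the Ardila--Boocher degeneration of $\overline{\cL}\subset(\PP^1)^n$ to the appropriate affine chart would likely work and is a genuinely different route, but the paper does not take it.
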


The simplicial complex $\Delta_w(M(\cL), I)$ will be defined in Section~\ref{sec:SimplicialComplex}.
For real linear spaces $\cL$, the variety $\inv_I(\cL)$ relates to the regions of a hyperplane arrangement.

\begin{thm}\label{thm:real}
Let $\cL\subseteq \C^n$ be a linear space that is invariant under complex conjugation. Then the following numbers are equal:
\begin{enumerate}
\item the degree of the affine variety $\inv_I(\cL)$,
\item the number of facets of the  semi-broken circuit complex $\Delta_w(M(\cL), I)$, and 
\item for generic $u\in \R^n$, the number of
regions in $(\cL^{\perp}+u)\backslash \{x_i = 0\}_{i\in I}$ 
whose recession cones trivially intersect $\R^I = \{x \in \R^n : x_j = 0 \text{ for } j\not\in I\}$. 
\end{enumerate}
\end{thm}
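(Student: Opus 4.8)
The plan is to obtain $(1)=(2)$ directly from Theorem~\ref{thm:main}, then to prove $(2)=(3)$ combinatorially, and finally to sketch a geometric derivation of $(1)=(3)$ via hyperbolicity that explains condition~(3).

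For $(1)=(2)$: since $\inv_I$ restricts to a birational isomorphism onto $\inv_I(\cL)$, this variety is irreducible of dimension $d$, so $\I$ is prime and $\C[\x]/\I$ has Krull dimension $d$. Because $w\in(\R_+)^n$ has \emph{positive} coordinates, the passage to $\In_w(\I)$ arises as a flat degeneration over $\mathbb{A}^1$ of the projective closure of $\inv_I(\cL)$, so $\deg\inv_I(\cL)=\deg\C[\x]/\In_w(\I)$. By Theorem~\ref{thm:main}, $\In_w(\I)$ is the Stanley--Reisner ideal of $\Delta:=\Delta_w(M(\cL),I)$, which is therefore of dimension $d-1$, and the degree of its Stanley--Reisner ring is the number of its $(d-1)$-dimensional faces; invoking the purity of $\Delta_w(M(\cL),I)$ from Section~\ref{sec:SimplicialComplex}, these are exactly its facets, giving $(1)=(2)$.

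For $(2)=(3)$: the goal is a bijection between the facets of $\Delta_w(M(\cL),I)$ and the regions of $(\cL^{\perp}+u)\setminus\{x_i=0\}_{i\in I}$ whose recession cone meets $\R^I$ only in $0$. I would argue by induction on $|I|$, pairing a deletion--contraction recursion on an element $m\in I$: on the matroid side, $\Delta_w(M(\cL),I)$ should decompose in terms of the complexes attached to $M(\cL)\setminus m$ and $M(\cL)/m$; on the arrangement side, each region for $\{x_i=0\}_{i\in I}$ refines a region for $\{x_i=0\}_{i\in I\setminus m}$ and is either unchanged or split by $\{x_m=0\}$ into pieces controlled by the arrangement induced on $(\cL^{\perp}+u)\cap\{x_m=0\}$. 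The base case $I=\emptyset$ is immediate: $\inv_\emptyset(\cL)=\cL$ has degree $1$, $\Delta_w(M(\cL),\emptyset)$ is a single simplex, and $\cL^{\perp}+u$ is one region with recession cone $\cL^{\perp}$, which meets $\R^{\emptyset}=\{0\}$ trivially. An alternative is a direct bijection: the entries of $w$ totally order $[n]$, the facets of $\Delta_w(M(\cL),I)$ are the ``$w$-broken-circuit-free'' bases, and a greedy sign-vector construction should match them with the admissible regions, generalizing the classical correspondence between no-broken-circuit bases and bounded regions. Checking that this correspondence is compatible with the recession-cone condition --- equivalently, that a region escapes to infinity within $\R^I$ exactly when its greedy data contains a $w$-broken circuit --- is the step I expect to be the main obstacle, since it is precisely where the partial (``semi'') inversion must be tracked.

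Finally, $(1)=(3)$ also has a geometric proof. Intersecting $\inv_I(\cL)$ with a generic real affine space $A$ of dimension $n-d$ from the family on which $\inv_I(\cL)$ is hyperbolic gives $\#(\inv_I(\cL)\cap A)=\deg\inv_I(\cL)$ with all intersection points real. Choosing $A=\cL^{\perp}+u$ for generic $u$, these points are the critical points on $A$ of a Varchenko-type master function --- essentially $\sum_{i\in I}\log|x_i|$ plus a quadratic correction in the remaining coordinates --- whose gradient along $A$ equals $\inv_I(x)$; a Morse-theoretic count over the regions of $(\cL^{\perp}+u)\setminus\{x_i=0\}_{i\in I}$ should produce exactly one critical point in each region whose recession cone meets $\R^I$ trivially, and none in the others. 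The delicate points are confirming reality through hyperbolicity (so that no complex critical points are missed) and the nondegeneracy needed for the Morse count. Either $(2)=(3)$ or this argument, combined with $(1)=(2)$, yields the theorem.
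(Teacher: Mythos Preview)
Your route via $(1)=(2)$ from Theorem~\ref{thm:main} and $(1)=(3)$ via a critical-point argument is exactly what the paper does; the paper makes no attempt at a direct combinatorial bijection $(2)=(3)$. A few specifics that the paper fills in where your sketch leaves gaps: the map used is $\inv_I^-$ (with a sign flip on the inverted coordinates), and the master function is precisely $f(x)=\tfrac{1}{2}\sum_{j\notin I}x_j^2-\sum_{j\in I}\log|x_j|$, whose gradient is $\inv_I^-(x)$; strict convexity of $f$ on each open region (not Morse theory) gives at most one critical point per region, and a separate lemma shows the infimum is attained on a region if and only if its recession cone meets $\R^I$ trivially. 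The reality of all intersection points with $\cL^{\perp}+u$ is proved by an elementary imaginary-part computation rather than by invoking hyperbolicity abstractly. Finally, the paper explicitly checks that the projective closure of $\inv_I^-(\cL)$ meets $\cL^{\perp}+x_0u$ only where $x_0\neq 0$, so that generic $u$ really yields degree-many intersection points; your sketch assumes this under ``genericity,'' but since $\cL^{\perp}+u$ is a very special pencil, this step needs the argument the paper gives.

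Your proposed inductive or greedy bijection for $(2)=(3)$ would be a genuinely new contribution if carried out: the paper's equality $(2)=(3)$ is obtained only indirectly through $(1)$, so no explicit matching of facets to regions appears. The obstacle you identify---tracking the recession-cone condition through the semi-inversion---is real, and a clean combinatorial proof here would likely clarify the role of the subset $I$ in a way the analytic argument does not.
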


The paper is organized as follows. The necessary definitions and background on matroid theory and Stanley-Reisner ideals 
are in Section~\ref{Background}. In Section~\ref{sec:SimplicialComplex} we define the simplicial complex $\Delta_w(M(\cL), I)$,
show that it satisfies a deletion-contraction relation analogous to that of the broken circuit complex of a matroid, and describe its relationship
to the external activity complex of a matroid.  Section~\ref{sec:proof} 
contains the proof of Theorem~\ref{thm:main}. We characterize the strata of $\inv_I(\cL)$ given by its intersection with coordinate subspaces in Section~\ref{sec:Supports}.
Finally, in Section~\ref{sec:real}, we show that for a real linear space $\cL$, $\inv_I(\cL)$ is a \emph{hyperbolic variety}, in the sense of \cite{KV,SV}, 
and prove Theorem~\ref{thm:real}. 

{\bf Acknowledgements.} The authors would like to thank Nicholas Proudfoot, Seth Sullivant, and Levent Tun\c cel for useful discussions over the course of this project and the anonymous referees for their careful reading and helpful suggestions.

\section{Background}\label{Background}

In this section, we review the necessary background on Gr\"obner bases, simplicial complexes, Stanley-Reisner ideals, matroids, and 
previous research on reciprocal linear spaces. 

\subsection{Gr\"obner bases and degenerations} \label{subsec:Grobner}

A finite subset $F$ of an ideal $\I \subset \C[\x]$ is a {\bf universal Gr\"obner basis} for $\I$ 
if it is a Gr\"obner basis with respect to every monomial order on $\C[\x]$. 
An equivalent definition using weight vectors is given as follows. For $w\in (\R_{\geq 0})^n$ and $f = \sum_\alpha c_{\alpha} \x^{\alpha} \in \C[\x]$, 
define the degree and initial form of $f$ with respect to $w$ to be 
\[
\deg_w(f) = \max\{w^T\alpha : c_{\alpha}\neq 0\} \ \ \text{ and } \ \ 
\In_w(f) \ = \ \sum_{\alpha: w^T\alpha = \deg_w(f)  } c_{\alpha} \x^{\alpha}. \]
The initial ideal $\In_w(\I)$ of an ideal $\I$ is the ideal generated by initial forms of polynomials in $\I$, i.e. $\In_w(\I) = \langle \In_w(f) : f\in \I\rangle$. 
Then $F\subset \I$ is a universal Gr\"obner basis for $\I$ if and only if for every $w\in (\R_{\geq 0})^n$, the polynomials $\In_w(F)$ generate $\In_w(\I)$. i.e.
See \cite[Chapter 1]{GrobnerPolytope}. 

For homogeneous $\I$, $\In_w(\I)$ is a \emph{flat degeneration} of $\I$. 
For $f\in \C[\x]$ and an integer vector $w\in (\Z_{\geq 0})^n$, define 
\[t^w\cdot f = t^{\deg_w(f)}f(t^{-w_1}x_1, \hdots, t^{-w_n}x_n) \in \C[t,\x]
\ \ \text{ and } \ \
\ol{\I}^w = \langle t^{w}\cdot f : f\in \I \rangle \subset \C[t,\x].
\]
The ideal $\ol{\I}^w$ defines a variety in $\mathbb{A}^1(\C) \times \PP^{n-1}(\C)$,
namely the Zariski-closure 
\[
\V(\ol{\I}^w) = \ol{\bigl\{ (t, [t^{w_1} x_1 : \hdots: t^{w_n}x_n ]) \ \text{ such that }  \ t\in {\C^*}, x\in \V(\I)\bigl\}}^{\rm Zar}.
\]
Letting $t$ vary from $1$ to $0$ gives a flat deformation from $\V(\I)$ to 
the variety $\V(\In_w(\I))$. Formally, for any $\gamma\in \C$, let $\ol{\I}^w(\gamma)$ denote the ideal in $\C[\x]$
obtained by substituting $t = \gamma$. 
Then $\ol{\I}^w(1)$ equals $ \I$,  $\ol{\I}^w(0)$ equals $\In_w(\I)$, 
and for $\gamma\in \C^*$, the variety of $\ol{\I}^w(\gamma)$ 
consists of the points $\{[\gamma^{w_1} x_1 : \hdots : \gamma^{w_n}x_n] : x\in \V(\I)\}$.  
We note that the ideal $\ol{\I}^w(\gamma)$ is well-defined for any $w\in \R^n$. 
All the ideals $\ol{\I}^w(\gamma)$ have the same Hilbert series. 
In particular, taking $\gamma = 0,1$ shows that $\I$ and $\In_w(\I)$ have the same Hilbert series.

\subsection{Simplicial complexes and Stanley-Reisner ideals} \label{subsec:SRI}
A Stanley-Reisner ideal is a square-free monomial ideal.  Its combinatorial 
properties are governed by a simplicial complex.  
A simplicial complex $\Delta$ on vertices $\{1,\hdots,n\}$ is a collection of subsets of 
$\{1,\hdots, n\}$, called faces, that is closed under taking subsets. If $S \in \Delta$ has 
cardinality $k+1$, we call it a face of dimension $k$. A {\bf facet} of $\Delta$ is a face maximal in $\Delta$ under inclusion. 
Given a simplicial complex $\Delta$ on $[n]\backslash \{i\}$, define the cone of $\Delta$ over $i$ to be 
\[ 
{\rm cone}(\Delta, i) \ = \ \Delta \cup \{ S\cup \{i\} : S \in \Delta\}, \]
which is a simplicial complex on $[n]$ whose facets are in bijection with the facets of $\Delta$. 

\begin{definition}[See e.g. {\cite[Chapter II]{StanleyCCABook}}]Let $\Delta$ be a simplicial complex on vertices $\{1,\hdots, n\}$. The {\bf Stanley-Reisner ideal} of $\Delta$ is the square-free monomial ideal 
\[
\I_{\Delta} = \left\langle \x^S\ : \ S\subseteq [n], \ S\not\in \Delta \right\rangle
\]
generated by monomials corresponding to the non-faces of $\Delta$. The {\bf Stanley-Reisner ring} of $\Delta$ is the quotient ring $\C[\x]/\I_{\Delta}$. 
\end{definition}

The ideal $\I_{\Delta}$ is radical 
and it equals the intersection of prime ideals
\[
\I_{\Delta} = \bigcap_{F \text{ a facet of }\Delta} \langle x_i : i\not\in F \rangle.
\]
This writes the variety $\V(\I_{\Delta})$ as the union of coordinate subspaces  ${\rm span}\{e_i : i\in F\}$ 
where $F$ is a facet of $\Delta$.  In particular, if $\Delta$ has $k$ facets of dimension $d-1$, then 
$\V(\I_{\Delta})\subseteq \PP^{n-1}(\C)$ is a variety of dimension $d-1$ and degree $k$. See \cite[Chapter II]{StanleyCCABook}.

\subsection{Matroids}
Matroids are a combinatorial model for many types of independence relations. See \cite{Oxley} for 
general background on matroid theory. We can associate a matroid $M(\cL)$ to a linear space      $\cL\subset \C^n$ as follows.
Write a $d$-dimensional linear space $\cL\subset \C^n$ as the rowspan of a $d\times n$ matrix $A = (a_1, \hdots, a_n)$. 
A set $I \subseteq [n]$ is \textbf{independent} in $M(\cL)$ 
if the vectors $\{a_i : i\in I\}$ are linearly independent in $\C^d$. For any invertible matrix $U\in \C^{d\times d}$, the vectors $\{a_i : i\in I\}$
are linearly independent if and only if the vectors $\{Ua_i : i\in I\}$ are also independent, implying that this condition is independent of the choice of basis for $\cL$. 
Indeed, $I\subseteq [n]$ is independent in $M(\cL)$ if and only if the coordinate linear forms $\{x_i : i\in I\}$ are linearly independent when restricted to $\cL$.

Maximal independent sets are called \textbf{bases} and minimal dependent sets are called \textbf{circuits}. We use $\B(M)$ and $\CC(M)$ to denote the 
set of bases and circuits of a matroid $M$, respectively.  An element $i\in [n]$ is called a \textbf{loop} if $\{i\}$ is a circuit, and a \textbf{co-loop} if $i$ is contained in 
every basis of $M$. 
The \textbf{rank} of a subset $S \subseteq [n]$ is the largest size of an independent set in $S$. A \textbf{flat} is a set $F\subseteq [n]$ that is maximal for its rank, meaning that 
${\rm rank}(F) <{\rm rank}(F \cup \{i\}) $ for any $i\not\in F$.

Let $M$ be a matroid on $[n]$ and $i \in [n]$. The \textbf{deletion} of $M$ by $i$, denoted  $M\backslash i $, is the matroid
on the ground set $[n]\backslash i$ whose independent sets are subsets $I\subset [n]\backslash i$ that are independent in $M$. 
If $i$ is not a co-loop of $M$, then 
\[
\B(M\backslash i) = \{ B\in \mathcal{B}(M): i\notin B \} 
\ \ \text{ and }  \ \
\CC(M\backslash i) = \{C\in \mathcal{C}(M):i\notin C \}. 
\]
More generally, the deletion of $M$ by a subset $S\subset [n]$, denoted $M\backslash S$, is the matroid obtained from $M$ by 
successive deletion of the elements of $S$.  The {\bf restriction} of $M$ to a subset $S$, denoted $M|_S$, is 
the deletion of $M$ by $[n]\backslash S$.

If $i$ is not a loop of $M$, then the \textbf{contraction} of $M$ by $i$, denoted  $M/ i $, is the matroid
on the ground set $[n]\backslash \{i\}$ whose independent sets are subsets $I\subset [n]\backslash i$ 
for which $I\cup \{i\}$ is independent in $M$. 
Then 
\begin{align*}
\B(M/ i) &= \{B\backslash i \ :  \ B \in \mathcal{B}(M), i\in B \},  \text{ and }  \\
\CC(M/ i) & = \text{inclusion minimal elements of }\{C\backslash i \ : \ C\in \mathcal{C}(M) \}. 
\end{align*}
If $i$ is a loop of $M$, then we define the contraction of $M/i$ to be the deletion $M\backslash i$. 
The contraction of $M$ by a subset $S\subset [n]$, denoted $M/ S$, is obtained from $M$ by 
successive contractions by the elements of $S$.

For linear matroids, deletion and contraction correspond to projection and intersection in the following sense. 
For $S\subset [n]$, let $\cL\backslash S$ denote the linear subspace of $\C^{[n]\backslash S}$ obtained by projecting $\cL$ away 
from the coordinate space $\C^S = {\rm span}\{e_i : i\in S\}$. 
Let $\cL/S$ denote the intersection of $\cL$ with $\C^{[n]\backslash S}$.  Then 
\[ M(\cL)\backslash S = M(\cL\backslash S) 
\ \ \text{ and } \ \ 
M(\cL)/S = M(\cL/S). 
\]

Many interesting combinatorial properties of a matroid can be extracted from a simplicial complex 
called the broken circuit complex. Given a matroid $M$ and the usual ordering $1<2< \hdots <n$ on $[n]$, 
a \textbf{broken-circuit} of $M$ is a  subset of the form $C\backslash \min(C)$ where $C\in \CC(M)$. 
The \textbf{broken-circuit complex} of $M$ is the simplicial complex on $[n]$ whose faces are 
the subsets of $[n]$ not containing any broken circuit. 

\subsection{Reciprocal linear spaces}
For $I = [n]$, the variety $\inv_I(\cL)$ is well-studied in the literature. 
Proudfoot and Speyer study the coordinate ring of the variety $\inv_{[n]}(\cL)$ and relate it to the broken circuit complex of a matroid \cite{PS}. 
One of their motivations is connections with the cohomology of the complement of a hyperplane arrangement. 
These varieties also appear in the algebraic study of interior point methods for linear programming \cite{DLSV12} and entropy maximization for 
log-linear models in statistics \cite{MSUZ14}.  

If the linear space $\cL$ is invariant under complex conjugation, the variety $\inv_{[n]}(\cL)$ also has a special real-rootedness 
property.  Specifically, if $\cL^{\perp}$ denotes the orthogonal complement of $\cL$, then 
for any $u\in \R^n$, all the intersection points of $\inv_{[n]}(\cL)$ and the affine space $\cL^{\perp} + u$ are real. 
This was first shown in different language by Varchenko \cite{Var95} and used extensively in \cite{DLSV12}. 
One implication of this real-rootedness is that the discriminant of the projection away from $\cL^{\perp}$ is a nonnegative polynomial 
\cite{SSV13}.  Another is that $\inv_{[n]}(\cL)$ is a hyperbolic variety, in the sense of \cite{SV}. 
In fact, the Chow form of the variety $\inv_{[n]}(\cL)$ has a definite determinantal representation, certifying its hyperbolicity \cite{KV}. 
We generalize some of these results to $\inv_I(\cL)$.

In closely related work  \cite{AB16}, Ardila and Boocher study the closure of a linear space $\cL$ inside of $(\PP^1)^n$. 
For any $I\subseteq [n]$, $\inv_I(\cL)$ can be considered as an affine chart of this projective closure. 
Specifically, let $X \subseteq (\PP^1)^n$ denote the closure of the image of $\cL$ under $(x_1, \hdots, x_n)\mapsto ([x_1:y_1], [x_2:y_2], \hdots, [x_n:y_n])$, 
with $y_1 = \hdots = y_n = 1$.  The restriction of $X$ to the affine chart $x_i = 1$ for $i\in I$ and $y_j = 1$ for $j\in [n]\backslash I$ 
is isomorphic to $\inv_I(\cL)$. 
The cohomology and intersection cohomology of the projective variety $X$ have been studied to great effect in \cite{HuhWang} and \cite{ProudfootXuYoung}.
The precise relationship between the \emph{external activity complex} of a matroid used in  \cite{AB16} and the semi-broken circuit complex 
is described at the end of Section~\ref{sec:SimplicialComplex}.

\section{A semi-broken circuit complex}\label{sec:SimplicialComplex}

Let $M$ be a matroid on elements $[n]$ and suppose $I \subseteq [n]$. 
A vector $w \in \R^n$ with distinct coordinates gives an ordering on $[n]$, where $i<j$ whenever $w_i< w_j$. 
Without loss of generality, we can assume $w_1< \hdots <w_n$, which induces the usual order $1<\hdots <n$. 
Given a circuit $C$ of $M$ we define an \textbf{$I$-broken circuit} of $M$ to be  
\[
b_I(C) \ \ = \ \ 
\begin{cases}
C\backslash\min(C) & \text{if }C \subseteq I \\
(C\cap I) \cup \max(C\backslash I) & \text{if }C \not\subseteq I.
\end{cases}
\]
Now we define the \textbf{$I$-broken circuit complex} of $M$ to be 
\begin{equation}\label{eq:complexDef}
\Delta_w(M,I) \ =  \ \{ \tau \subseteq [n] \ : \ \tau \text{ does not contain an $I$-broken circuit of $M$}\}.
\end{equation}
Note that an $[n]$-broken circuit is a broken circuit in the usual sense and 
$\Delta_w(M,[n])$ is the well-studied broken circuit complex of $M$.

\begin{ex} \label{ex:NonGenMatroid} 
Consider the rank-$3$ matroid on $[5]$ with circuits $\mathcal{C} = \{124, 135, 2345\}$.
Let  $I = \{1,2,3\}$ and suppose $w\in (\R_+)^5$ with $w_1<\hdots < w_5$.
Then its $I$-broken circuits are 
$b_I(124) = 124$, $b_I(135) = 135$, and
$b_I(2345) = 235$.
The  simplicial complex $\Delta_w(M,I)$ is a pure $2$-dimensional simplicial complex with facets:
\[ {\rm facets}(\Delta_w(M,I))  \ \ = \ \ \{ 123, 125, 134, 145, 234, 245, 345   \}.\]
\end{ex}

The $I$-broken circuit complex shares many properties with the classical one, which will imply that 
it is always pure of dimension ${\rm rank}(M)-1$.

\begin{thm}\label{thm:Complex}
Let $\Delta_w(M,I)$ be the $I$-broken circuit complex defined in \eqref{eq:complexDef}. 
\begin{itemize}
\item[(a)] If $i\in I$ is a loop of $M$, then $\Delta_w(M,I) = \emptyset$. 
\item[(b)] If $i\in I$ is a coloop of $M$, then $\Delta_w(M,I)={\rm cone}(\Delta_w(M/i,I\backslash i),i)$.
\item[(c)] If $i=\max(I)$ is neither a loop nor a coloop of $M$, then 
\[\Delta_w(M,I)  \ = \ \Delta_w(M\backslash i,I \backslash i)  \ \cup \ {\rm cone}( \Delta_w(M/i,I\backslash i), i).   \]
\end{itemize}
\end{thm}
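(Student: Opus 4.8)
The plan is to establish parts (a), (b), (c) in order, using the characterization of faces of $\Delta_w(M,I)$ as subsets containing no $I$-broken circuit, together with the deletion-contraction behavior of circuits recalled in Section~\ref{Background}. Throughout I would fix the ordering $w_1 < \cdots < w_n$ and, in parts (b) and (c), the distinguished element $i$.

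For (a), if $i \in I$ is a loop then $\{i\}$ is a circuit with $\{i\} \subseteq I$, so $b_I(\{i\}) = \{i\}\backslash \min(\{i\}) = \emptyset$. Since every subset $\tau$ contains $\emptyset$, no $\tau$ is a face, giving $\Delta_w(M,I) = \emptyset$. This is immediate and poses no obstacle.

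For (b), suppose $i \in I$ is a coloop. Then $i$ lies in no circuit of $M$, so every circuit $C$ satisfies $i \notin C$, hence $i \notin b_I(C)$ as well; moreover the circuits of $M$ and of $M/i$ coincide (as $i$ is a coloop, deletion of $i$ removes nothing from any circuit, and $\CC(M/i)$ is obtained from $\{C \backslash i : C \in \CC(M)\} = \CC(M)$). One then checks that the $I$-broken circuits of $M$ equal the $(I\backslash i)$-broken circuits of $M/i$: the $\min$ and $\max$ operations defining $b_I(C)$ are unaffected since $i$ appears in no $C$, and the condition $C \subseteq I$ versus $C \subseteq I \backslash i$ is equivalent for such $C$. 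Therefore $\tau \in \Delta_w(M,I)$ if and only if $\tau \backslash i \in \Delta_w(M/i, I\backslash i)$, which is exactly the statement that $\Delta_w(M,I) = {\rm cone}(\Delta_w(M/i, I\backslash i), i)$. The only mildly delicate point is matching up circuits of $M$ and $M/i$, but for a coloop this is routine.

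For (c), with $i = \max(I)$ neither a loop nor a coloop, I would argue by splitting faces according to whether they contain $i$. A face $\tau$ with $i \notin \tau$ is in $\Delta_w(M,I)$ iff $\tau$ contains no $I$-broken circuit; since $i = \max(I)$, any $I$-broken circuit $b_I(C)$ contained in $\tau$ cannot use $i$, and one shows $b_I(C) \subseteq [n]\backslash i$ forces $C$ itself to avoid $i$ (here using that $i = \max(I)$, so if $C \not\subseteq I$ then $\max(C\backslash I)\neq i$, and if $C\subseteq I$ then $i\in b_I(C)$ unless $i=\min C$, but then... — this is the case requiring care), hence $C \in \CC(M\backslash i)$, giving $\tau \in \Delta_w(M\backslash i, I\backslash i)$. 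Conversely a face $\tau$ with $i \in \tau$: write $\tau = \sigma \cup \{i\}$ with $i\notin\sigma$; I must show $\tau \in \Delta_w(M,I)$ iff $\sigma \in \Delta_w(M/i, I\backslash i)$. For this I would use that $\CC(M/i)$ consists of the inclusion-minimal sets among $\{C\backslash i : C \in \CC(M)\}$, and carefully track how $b_I$ interacts with removing $i$ from a circuit containing $i$ — matching $b_I(C)$ for $i \in C$ against $b_{I\backslash i}(C\backslash i)$ up to the element $i$.

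The main obstacle I expect is precisely the bookkeeping in part (c): verifying that the $I$-broken circuits of $M$ not containing $i$ are exactly the $(I\backslash i)$-broken circuits of $M\backslash i$, and that the $I$-broken circuits of $M$ that \emph{do} involve $i$ correspond, after deleting $i$, to the $(I\backslash i)$-broken circuits of $M/i$ — all while keeping straight the three sub-cases in the definition of $b_I(C)$ ($C\subseteq I$ versus $C\not\subseteq I$, and within the latter whether $i \in C\cap I$ or $i = \max(C\backslash I)$, the latter being impossible since $i\in I$). The choice $i = \max(I)$ is what makes these case distinctions collapse favorably, and I would emphasize where that hypothesis is used. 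Once the circuit correspondence is pinned down, the identity on simplicial complexes follows formally from the definition \eqref{eq:complexDef}, exactly as the classical deletion-contraction for the broken circuit complex.
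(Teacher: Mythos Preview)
Your approach to (a) and (b) matches the paper's and is correct, and the overall structure for (c) --- splitting faces by whether $i\in\tau$ --- is likewise the same. There is, however, a genuine gap in your plan for the direction $\sigma\in\Delta_w(M/i,I\backslash i)\Rightarrow\sigma\cup\{i\}\in\Delta_w(M,I)$.

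You propose to verify that ``the $I$-broken circuits of $M$ that involve $i$ correspond, after deleting $i$, to the $(I\backslash i)$-broken circuits of $M/i$.'' This correspondence fails in general: not every circuit of $M/i$ arises as $C\backslash i$ for a circuit $C$ of $M$ with $i\in C$. For instance, take $M$ of rank~$2$ on $\{1,2,3,4\}$ with circuits $\{1,2\}$ and $\{3,4\}$, and let $I=\{1\}$, $i=1$. Then $\{3,4\}\in\CC(M/1)$ has $\emptyset$-broken circuit $\{4\}$, but the only circuit of $M$ containing $1$ is $\{1,2\}$, whose broken circuit yields $\{2\}$. So the sets of broken circuits do not match.

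The underlying issue is this: to show $\sigma\cup\{i\}$ contains no $I$-broken circuit of $M$, you must handle circuits $C$ of $M$ with $i\notin C$. You only track circuits containing $i$. A circuit $C$ with $i\notin C$ need not be a circuit of $M/i$, so the hypothesis on $\sigma$ does not apply directly. What the paper does is invoke the fact that such a $C$ is a \emph{union} of circuits of $M/i$, and then choose a circuit $C'\subseteq C$ of $M/i$ containing the distinguished element ($\min(C)$ if $C\subseteq I$, or $\max(C\backslash I)$ otherwise), which forces $b_{I\backslash i}(C')\subseteq b_I(C)$. Since $\sigma$ avoids $b_{I\backslash i}(C')$, it avoids $b_I(C)$. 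This is the one step that goes beyond bookkeeping, and your plan does not account for it.

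(Separately, the case you flag as ``requiring care'' --- when $C\subseteq I$, $i\in C$, and $i=\min(C)$ --- is vacuous: since $i=\max(I)$ and $C\subseteq I$, we have $i=\max(C)$, so $i=\min(C)$ would force $|C|=1$, contradicting that $i$ is not a loop.)
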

\begin{proof}
(a) If $i\in I$ is a loop, then $C = \{i\}$ is a circuit of $M$ with $b_I(C) = \emptyset$. 

(b) If $i\in I$ is a coloop, then no circuit of $M$, and hence no $I$-broken circuit, contains $i$. 
The circuits of $M$ are exactly the circuits of the contraction $M/i$ and the $I$-broken circuits of $M$ 
are the $(I\backslash i)$-broken circuits of $M/i$. 
Therefore $\tau$ is a face of $\Delta_w(M,I)$ if any only if $\tau\backslash i$ is a face of $\Delta_w(M/i,I\backslash i)$.

(c) ($\subseteq$) Let $\tau$ be a face of $\Delta_w(M,I)$.  We will show that if $i\not\in \tau$, then $\tau$ is a 
face of $\Delta_w(M\backslash i ,I\backslash i)$ and if $i \in \tau$, then $\tau\backslash i$ is a face of $\Delta_w(M/i ,I\backslash i)$.

If $i\not\in \tau$ and $C$ is a circuit of the deletion $M\backslash i$, then $C$
is a circuit of $M$, and $b_I(C)=b_{I\backslash i}(C)$ is an $I$-broken circuit of $M$ and therefore is not contained in $\tau$.  
If $i \in \tau$ and $C$ is a circuit of the contraction $M/i$, then either $C$ or $C\cup \{i\}$ is a circuit of $M$.  
In the first case, we again have that $b_I(C)=b_{I\backslash i}(C)$ is not contained in $\tau$ and thus not contained in $\tau\backslash i$.  
Secondly, suppose that $C\cup\{i\}$ is a circuit of $M$. 
If $C\subseteq I$, then $b_I(C\cup\{i\})$ is equal to $C\cup\{i\}\backslash \min(C\cup\{i\})$.  Since $i$ is the maximum element of $I$, 
this equals $C\backslash \min(C) \cup\{i\}$. This set is not contained in $\tau$. Therefore $b_{I\backslash i}(C) = C\backslash \min(C)$ 
is not contained in $\tau \backslash i$. 
If $C\not\subseteq I$, then the $I$-broken circuit of $C\cup \{i\}$ is $(C\cap I) \cup \{ i\} \cup \max(C\backslash I)$, 
which equals $b_{I\backslash i}(C) \cup \{i\}$.  Since $\tau$ cannot contain an $I$-broken circuit of $M$, $\tau\backslash i$ does not contain $b_{I\backslash i}(C)$. 

($\supseteq$) Let $\tau$ be a face of $\Delta_w(M\backslash i ,I\backslash i)$ and suppose $C$ is a circuit of $M$. 
If $i\not\in C$, then $C$ is also a circuit of $M\backslash i $, implying that $b_I(C)$ is not contained in $\tau$. 
If $i\in C$ and $C\subseteq I$, then $i = \max(C)$. Since $i$ is not a loop, this implies that $i\in b_I(C)$, which 
 cannot be contained in $\tau$. Similarly, if $i\in C$ and $C\not\subset I$, then $i\in b_I(C)$ and $b_I(C)\not\subset \tau$. 

Finally, let $\tau$ be a face of $\Delta_w(M/ i ,I\backslash i)$ and let $C$ be a circuit of $M$. 
If $i\in C$, then $C\backslash i $ is a circuit of $M/i$. Then 
$b_I(C)$ equals $b_{I\backslash i}(C\backslash i)\cup \{i\}$.  Since $\tau$ cannot contain $b_{I\backslash i}(C\backslash i)$, $\tau \cup\{i\}$ does not contain $b_I(C)$. 
If $i\not\in C$, then $C$ is a union of circuits of $M/i$, see \cite[\S 3.1, Exercise 2]{Oxley}. 
If $C\subseteq I$, then there is a circuit $C' \subseteq C$ of $M/i$ containing $\min(C)$. 
Then $C'\subseteq I\backslash i$ and $b_{I\backslash i}(C')$ is a subset of $b_I(C)$. 
Similarly, if $C\not\subseteq I$, then there is a circuit $C' \subseteq C$ of $M/i$ containing $\max(C\backslash I)$, 
giving $b_{I\backslash i}(C') \subseteq b_I(C)$. In either case, $\tau$ is a face of $\Delta_w(M/ i ,I\backslash i)$ and cannot contain
the broken circuit $b_{I\backslash i}(C')$ and therefore $\tau \cup \{i\}$ cannot contain $b_I(C)$. 
\end{proof}

\begin{cor} \label{cor:DeltaDim}
If $M$ is a matroid of rank $d$ with no loops in $I$, then $\Delta_w(M,I)$ is a pure simplicial complex of dimension $d-1$.
\end{cor}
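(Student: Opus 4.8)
The plan is to induct on the number of elements of $M$ (or on $|I|$ together with $n$), using the deletion-contraction trichotomy of Theorem~\ref{thm:Complex} as the induction step. The key point is that purity of a given dimension is preserved under each of the three operations appearing in that theorem: coning raises dimension by exactly one and preserves purity, and a union of two pure complexes of the same dimension $d-1$ is again pure of dimension $d-1$ (a facet of the union is a facet of one of the pieces, since any face is a face of one piece and, being maximal, cannot be properly contained in a face of the other piece either). So the whole argument reduces to bookkeeping on how the rank and the set $I$ change, and to checking that the hypothesis ``no loops in $I$'' is inherited.

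First I would set up the base case: if $M$ has rank $0$, then every element is a loop, so the hypothesis forces $I=\emptyset$; then there are no $I$-broken circuits coming from elements of $I$, but the empty set is a circuit only if... — more carefully, when $I = \emptyset$ the definition gives $b_\emptyset(C) = \max(C)$ for every circuit $C$, and $\Delta_w(M,\emptyset)$ is the complex of subsets of $[n]$ avoiding $\{\max(C) : C \in \CC(M)\}$; this is the independence complex of $M$ with the co-loop-... Actually the cleanest base case is $n=0$, where $\Delta_w(M,I) = \{\emptyset\}$ is pure of dimension $-1 = d-1$. For the inductive step with $n\geq 1$, pick $i = \max(I)$ if $I \neq \emptyset$, or $i = \max([n])$ if $I=\emptyset$ — but in the latter case one should instead observe directly that $\Delta_w(M,\emptyset)$ equals the broken circuit complex of $M$ under a suitable relabeling of the ground set and invoke the classical purity result; alternatively, extend Theorem~\ref{thm:Complex}(c) mentally to the case $i\notin I$, where contraction/deletion behave the same way. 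I would handle the $I=\emptyset$ case separately by citing that the (ordinary) broken circuit complex of a loopless matroid of rank $d$ is pure of dimension $d-1$ (Stanley, \cite{StanleyCCABook}), noting that loops outside $I$ are allowed since a loop $\ell\notin I$ makes $\{\ell\}$ a circuit with $b_\emptyset(\{\ell\}) = \{\ell\}$, just deleting the vertex $\ell$ from the complex without affecting purity.

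Assume now $I \neq \emptyset$ and set $i = \max(I)$. There are three cases matching Theorem~\ref{thm:Complex}. If $i$ is a loop of $M$, the hypothesis is violated, so this case does not occur. If $i$ is a coloop, then $M/i$ has rank $d-1$, still has no loops in $I\setminus i$ (contracting a coloop removes no elements from the ground set except $i$ and creates no new loops among the remaining elements — a coloop lies in no circuit, so circuits are unchanged), hence by induction $\Delta_w(M/i, I\setminus i)$ is pure of dimension $d-2$, and $\Delta_w(M,I) = \mathrm{cone}(\Delta_w(M/i,I\setminus i),i)$ is pure of dimension $d-1$. If $i$ is neither, then $M\setminus i$ has rank $d$ (deleting a non-coloop preserves rank) with no loops in $I\setminus i$ (deletion creates no loops), so $\Delta_w(M\setminus i, I\setminus i)$ is pure of dimension $d-1$ by induction; and $M/i$ has rank $d-1$ — here is the one place to be careful — and I must check it has no loops in $I\setminus i$: an element $j \in I\setminus i$ becomes a loop of $M/i$ exactly when $\{i,j\}$ is a circuit of $M$, which would make $j$ parallel to $i$; this genuinely can happen, so the induction hypothesis as stated is not quite inherited. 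The main obstacle is therefore this subtlety: I would resolve it either by strengthening the inductive statement to allow loops in $I$ (reverting to part (a), which then says the complex is empty — but empty is not pure of dimension $d-1$), or, more likely, by observing that if $j\in I\setminus i$ is parallel to $i$ then $\{j\}$ is a circuit of $M/i$ lying in $I\setminus i$, so $b_{I\setminus i}(\{j\}) = \emptyset$ and $\Delta_w(M/i, I\setminus i) = \emptyset$, whence the cone over it is empty and $\Delta_w(M,I) = \Delta_w(M\setminus i, I\setminus i)$, which is already pure of the right dimension — so the corollary still holds, just via a degenerate instance of the union. I would phrase the induction so that the "$M/i$" branch is simply dropped whenever $M/i$ has a loop in $I\setminus i$, using that in that situation $\mathrm{cone}(\emptyset, i) = \emptyset$ contributes nothing to the union in Theorem~\ref{thm:Complex}(c).
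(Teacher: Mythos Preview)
Your inductive step is exactly the paper's argument: induct on $|I|$, take $i=\max(I)$, and apply Theorem~\ref{thm:Complex} in the coloop and non-coloop cases, noting (as the paper does) that when $M/i$ acquires a loop in $I\setminus i$ the cone contributes nothing and the union reduces to the deletion side. So the heart of your proof matches the paper.

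The one genuine slip is in the base case $I=\emptyset$. Your claim that $\Delta_w(M,\emptyset)$ ``equals the broken circuit complex of $M$ under a suitable relabeling'' is false: when $I=\emptyset$ every circuit satisfies $C\not\subseteq I$, so $b_\emptyset(C)=\{\max(C)\}$ is a \emph{singleton}, not a set of size $|C|-1$. The minimal non-faces of $\Delta_w(M,\emptyset)$ are therefore single vertices, and the complex is simply the full simplex on the set $[n]\setminus\{\max(C):C\in\CC(M)\}$. Your own earlier line ``the complex of subsets of $[n]$ avoiding $\{\max(C):C\in\CC(M)\}$'' is correct; you should have followed it through rather than reaching for the classical broken-circuit result. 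The paper finishes this directly: $j\in[n]$ is \emph{not} the maximum of any circuit precisely when ${\rm rank}([j])>{\rm rank}([j-1])$, so the surviving vertex set is exactly the lexicographically smallest basis $B$ of $M$, and $\Delta_w(M,\emptyset)$ is the simplex on $B$ --- pure of dimension $d-1$. No relabeling, no citation needed; it is a single facet.
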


\begin{proof}
We induct on the size of $I$. If $I = \emptyset$, then for every circuit $C$, the broken circuit $b_I(C)$ is 
the maximum element $\max(C)$. In this case, the simplicial complex $\Delta_w(M,I)$ consists of 
one maximal face $B$, where $B$ is the lexicographically smallest basis of $M(\cL)$.  
Here $B$ consists of the elements $i\in [n]$ for which the rank of $[i]$ in $M(\cL)$ is strictly larger than the rank of $[i-1]$.  
Every other element is the maximal element of some circuit of $M$. 

Now suppose $|I| >0$ and consider $i = \max(I)$.  If $i$ is a coloop of $M$, then the contraction $M/i$ is a matroid of rank $d-1$ with no loops in $I\backslash i$. 
Then by induction and Theorem~\ref{thm:Complex}(b), $\Delta_w(M,I) = {\rm cone}(\Delta_w(M/i,I\backslash i),i)$ is a pure simplicial complex of 
dimension $d-1$.  Finally, suppose $i$ is neither a loop nor a coloop of $M$. 
Then the deletion $M\backslash i$ is a matroid of rank $d$ and 
no element of $I\backslash i$ is a loop of $M\backslash i$. It follows that $\Delta_w(M\backslash i, I\backslash i)$ is a pure simplicial complex of 
dimension $d-1$.  The contraction $M/i$ is a matroid of rank $d-1$, implying that $\Delta_w(M/i,I\backslash i)$ is either empty (if $I\backslash i$ contains a loop of $M/i$), 
or a pure simplicial complex of dimension $d-2$.  In either case the decomposition in Theorem~\ref{thm:Complex} finishes the proof.
\end{proof}

We can also see this via connections with the \emph{external activity complex} defined by Ardila and Boocher \cite{AB16}.
Following their convention, for subsets $S,T\subseteq [n]$,  we use $x_Sy_T$ to denote the set $\{x_i:i\in S\}\cup \{y_j:j\in T\}$.

\begin{definition}\cite[Theorem 1.9]{AB16}
Let $M$ be a matroid and suppose $u\in \R^n$ has distinct coordinates. Then $u$ induces an order on $[n]$ where $i<j$ if and only if $u_i<u_j$. 
The \textbf{external activity complex} $B_u(M)$ is the
simplicial complex on the ground set $\{x_i,y_i:i\in [n]\}$ whose minimal non-faces 
are $\{x_{\min_{<_u}(C)}y_{C\backslash \min_{<_u}(C)}: C\in \mathcal{C} \}$.
\end{definition}

Given a weight vector $w\in (\R_{\geq 0})^n$ with distinct coordinates, define $u\in \R^n$ 
\[
u_i  = w_i \ \text{ for }i\in I \ \ \ \ \text{ and  }\ \ \ \  u_j  = -w_j \ \text{ for }j\not\in I.  \ 
\]
With this translation of weights, we can realize the semi-broken circuit complex $\Delta_w(M,I)$ as 
the link of a face in the external activity complex $B_u(M)$.  
Formally, the \textbf{link} of a face $\sigma$ in the simplicial complex
  $\Delta$ is the simplicial complex
  \[
   {\rm link}_{\Delta}(\sigma)=\{\tau \in \Delta : \tau \cup \sigma
    \in \Delta \text{ and } \tau \cap \sigma =\emptyset \}.
  \]
It is the set of faces that are disjoint from $\sigma$ but whose unions with
$\sigma$ lie in $\Delta$.

\begin{prop} Define weight vectors $u,w\in \R^n$ as above. If the matroid $M$ has no loops in $I$, then 
the semi-broken circuit complex $\Delta_w(M,I)$ is isomorphic to the link of 
the face $x_Iy_{[n]\backslash I}$ in the external activity
complex $B_u(M)$.
\end{prop}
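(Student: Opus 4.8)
The plan is to identify explicitly which faces of $B_u(M)$ lie in the link of $\sigma = x_Iy_{[n]\backslash I}$, and then match those faces bijectively with the faces of $\Delta_w(M,I)$. First I would observe that, since $M$ has no loops in $I$, the face $\sigma$ is indeed a face of $B_u(M)$: a minimal non-face of $B_u(M)$ has the form $x_{\min_{<_u}(C)}y_{C\backslash\min_{<_u}(C)}$ for a circuit $C$, and for $\sigma$ to contain such a set we would need $\min_{<_u}(C)\in I$ and $C\backslash\min_{<_u}(C)\subseteq[n]\backslash I$. But elements of $I$ have positive $u$-weight and elements of $[n]\backslash I$ have negative $u$-weight, so $\min_{<_u}(C)\in[n]\backslash I$ whenever $C\not\subseteq I$; and if $C\subseteq I$ then $C\backslash\min_{<_u}(C)\subseteq I$ is nonempty (as $|C|\ge 2$ since there are no loops in $I$), so it is not contained in $[n]\backslash I$. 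Hence $\sigma$ contains no minimal non-face and is a face.

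Next, a face $\tau$ of $B_u(M)$ disjoint from $\sigma$ is a subset of $\{x_j : j\in[n]\backslash I\}\cup\{y_i : i\in I\}$; I would encode such a $\tau$ by the subset $S(\tau)=\{j\in[n]\backslash I : x_j\in\tau\}\cup\{i\in I : y_i\in\tau\}\subseteq[n]$, which is clearly a bijection between such subsets $\tau$ and subsets $S\subseteq[n]$. The claim is that $\tau\cup\sigma\in B_u(M)$ if and only if $S(\tau)\in\Delta_w(M,I)$, i.e. $S(\tau)$ contains no $I$-broken circuit of $M$. To prove this I would check, circuit by circuit, that the minimal non-face $x_{\min_{<_u}(C)}y_{C\backslash\min_{<_u}(C)}$ is contained in $\tau\cup\sigma$ precisely when $b_I(C)\subseteq S(\tau)$. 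Unwinding: the $x_i$'s and $y_j$'s already supplied by $\sigma$ are exactly $x_i$ for $i\in I$ and $y_j$ for $j\in[n]\backslash I$; the remaining generators $x_j$ for $j\in[n]\backslash I$ and $y_i$ for $i\in I$ must come from $\tau$. So $x_{\min_{<_u}(C)}y_{C\backslash\min_{<_u}(C)}\subseteq\tau\cup\sigma$ iff (the $x$-part not already in $\sigma$) and (the $y$-part not already in $\sigma$) both lie in $\tau$, i.e. iff
\[
\bigl(\{\min_{<_u}(C)\}\cap([n]\backslash I)\bigr)\cup\bigl((C\backslash\min_{<_u}(C))\cap I\bigr)\ \subseteq\ S(\tau).
\]
Now I split on whether $C\subseteq I$. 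If $C\subseteq I$, then since all elements of $I$ have positive $u$-weight, $\min_{<_u}(C)=\min(C)$ in the $w$-order, the $x$-part above is empty, and the displayed set is $C\backslash\min(C)=b_I(C)$. If $C\not\subseteq I$, then $\min_{<_u}(C)$ is the $w$-largest element of $C\backslash I$, namely $\max(C\backslash I)$, so $\{\min_{<_u}(C)\}\cap([n]\backslash I)=\{\max(C\backslash I)\}$, while $C\backslash\min_{<_u}(C)$ contains all of $C\cap I$, so $(C\backslash\min_{<_u}(C))\cap I=C\cap I$, and the displayed set is $(C\cap I)\cup\max(C\backslash I)=b_I(C)$. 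In both cases the condition is exactly $b_I(C)\subseteq S(\tau)$, which is what we wanted.

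Assembling these, $\tau$ lies in $\mathrm{link}_{B_u(M)}(\sigma)$ iff $\tau$ is disjoint from $\sigma$ and $\tau\cup\sigma$ avoids every minimal non-face, iff $S(\tau)$ avoids every $I$-broken circuit, iff $S(\tau)\in\Delta_w(M,I)$; and the map $\tau\mapsto S(\tau)$ is a bijection respecting inclusions, hence a simplicial isomorphism. I do not expect a serious obstacle here; the only thing to be careful about is the sign convention on $u$ and the resulting identification of $\min_{<_u}(C)$ with $\min(C)$ versus $\max(C\backslash I)$ in the $w$-order, and the use of the no-loops-in-$I$ hypothesis to guarantee $\sigma$ is itself a face. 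A minor point worth stating cleanly is that a loop $i\in I$ would give the circuit $\{i\}$ with $\min_{<_u}(\{i\})=i\in I$ and empty $y$-part, so $x_i\in\sigma$ would already be a non-face, consistent with $\Delta_w(M,I)=\emptyset$ from Theorem~\ref{thm:Complex}(a); excluding loops in $I$ is exactly what makes the statement nonvacuous.
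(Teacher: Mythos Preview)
Your proof is correct and follows essentially the same approach as the paper's: verify $\sigma$ is a face using the no-loops-in-$I$ hypothesis, set up the natural vertex bijection $i\leftrightarrow y_i$ for $i\in I$ and $j\leftrightarrow x_j$ for $j\notin I$, and then check circuit-by-circuit that the portion of each minimal non-face $x_{\min_{<_u}(C)}y_{C\backslash\min_{<_u}(C)}$ not already in $\sigma$ corresponds exactly to $b_I(C)$. The case split on $C\subseteq I$ versus $C\not\subseteq I$ and the identification of $\min_{<_u}(C)$ with $\min(C)$ or $\max(C\backslash I)$ in the $w$-order match the paper's argument step for step.
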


\begin{proof}
First we show that $\sigma = x_Iy_{[n]\backslash I}$ is
actually a face of  $B_u(M)$ by arguing that $\sigma$ does not contain the minimal
non-face $x_{\min_{<_u}(C)}y_{C\backslash \min_{<_u}(C)}$ for any
circuit $C$ of $M$.
If $C$ is contained in $I$, then so is
$C\backslash \min_{<_u}(C)$. 
Since $M$ has no loops in $I$, this is nonempty and we can take $i\in C\backslash \min_{<_u}(C)$.
Then $y_i$ belongs to the non-face $x_{\min_{<_u}(C)}y_{C\backslash \min_{<_u}(C)}$, but not $\sigma$. 
On the other hand, if $C$ is not contained in $I$, we know $\min_{<_u}(C)$ is
contained in the complement of $I$, since the weight vector entries
satisfy $u_j<u_i$ for all $i\in I$ and $j\not\in I$.
Hence $x_{\min_{<_u}(C)}$, an element of the minimal
non-face associated to $C$,  does not belong to
$\sigma$.

Now we argue that $\Delta_w(M,I)$ and the link of $\sigma$ in $B_u(M)$ are isomorphic by 
identifying their non-faces. Note that the link of $\sigma$ is supported on the vertex set 
$x_{[n]/I}y_I$.  The bijection of vertices is then just $j\leftrightarrow x_j$ for $j\not\in I$ and $i \leftrightarrow y_i$ for $i\in I$. Note that $\tau\subseteq x_{[n]\backslash I}y_{I}$ is a face of the link 
of $\sigma$ in $B_u(M)$ if and only if for every circuit $C$, $\tau$ does not contain the intersection 
of the non-face $x_{\min_{<_u}(C)}y_{C\backslash \min_{<_u}(C)}$ with $x_{[n]\backslash I}y_{I}$. 
It suffices to check that these intersections are exactly the $I$-broken circuits of $M$. 

If $C$ is contained in $I$, then $w$ and $u$ give the same order on elements of $C$ and 
 $b_I(C)$  equals $ C\backslash\min_{<_u}(C)= C\backslash\min_{<_w}(C)$. Since $x_{\min_{<_u}(C)}\in \sigma$, we find that 
 \[  
 b_I(C) \ = \  \textstyle{C\backslash\min_{<_u}(C)} \ \ \leftrightarrow \ \ y_{C\backslash\min_{<_u}(C)} \ = \ x_{\min_{<_u}(C)}y_{C\backslash\min_{<_u}(C)} \backslash \sigma.
 \]
If $C$ is not contained in $I$, then  $b_I(C)$ equals $(C\cap I) \cup \max_{<_w}(C\backslash I)$. 
Since $u$ reverses the order on $[n]\backslash I$, this equals
$(C\cap I) \cup \min_{<_u}(C\backslash I)$. Then  
 \[  
 b_I(C) \ = \  \textstyle{(C\cap I) \cup \min_{<_u}(C\backslash I)} \ \ \leftrightarrow \ \ x_{\min_{<_u}(C\backslash I)}y_{C\cap I} \ = \ x_{\min_{<_u}(C)}y_{C\backslash\min_{<_u}(C)} \backslash \sigma,
 \]
where the equality $x_{\min_{<_u}(C\backslash I)} = x_{\min_{<_u}(C)}$  holds because 
$u_j < u_i$ for all $i\in I$ and $j\not\in I$.  
This shows that under this bijection of the vertices, the semi-broken circuit complex equals the link of $\sigma$ in the external activity complex.
\end{proof}

\begin{cor}
 The semi-broken circuit complex is shellable.
\end{cor}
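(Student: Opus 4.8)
The plan is to read off shellability directly from the previous proposition, which identifies $\Delta_w(M,I)$ with a link in the external activity complex $B_u(M)$, together with Ardila and Boocher's theorem that $B_u(M)$ is shellable \cite{AB16} and the classical fact that links of faces in shellable complexes are shellable. First I would dispose of the degenerate case: if $M$ has a loop $i\in I$, then $\Delta_w(M,I)=\emptyset$ by Theorem~\ref{thm:Complex}(a), which we may regard as (vacuously) shellable, so we may assume that $M$ has no loops in $I$.

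Under that hypothesis the previous proposition supplies an isomorphism of simplicial complexes between $\Delta_w(M,I)$ and ${\rm link}_{B_u(M)}(x_Iy_{[n]\backslash I})$, where $u$ is the weight vector associated to $w$ as above. Since shellability depends only on the isomorphism type of a complex, it suffices to show that this link is shellable. For this I would invoke the principle, due to Bj\"orner and Wachs, that the link of any face in a shellable (possibly nonpure) complex is again shellable: concretely, the facets of ${\rm link}_\Delta(\sigma)$ are exactly the sets $F\backslash\sigma$ for $F$ a facet of $\Delta$ containing $\sigma$, and listing them in the order induced by a shelling $F_1,\dots,F_t$ of $\Delta$ gives a shelling of ${\rm link}_\Delta(\sigma)$. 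Applying this with $\Delta=B_u(M)$, $\sigma=x_Iy_{[n]\backslash I}$, and the Ardila--Boocher shelling of $B_u(M)$ produces a shelling of the link, hence of $\Delta_w(M,I)$; by Corollary~\ref{cor:DeltaDim} this shelling is moreover pure of dimension $\mathrm{rank}(M)-1$.

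The only genuine point of care is that the isomorphism in the previous proposition is asserted only when $M$ has no loops in $I$, which is why the loop case must be handled separately; apart from that, the argument is a short chain of known results and presents no substantial obstacle. If an explicit shelling of $\Delta_w(M,I)$ were desired instead, one could alternatively argue by induction using the deletion--contraction decomposition of Theorem~\ref{thm:Complex}(c), shelling $\Delta_w(M\backslash i,I\backslash i)$ first and then the cone over $\Delta_w(M/i,I\backslash i)$ and checking the gluing condition, in direct analogy with the standard shelling of the ordinary broken circuit complex; but this is not needed for the statement.
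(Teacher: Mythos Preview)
Your argument is correct and follows essentially the same route as the paper: identify $\Delta_w(M,I)$ with a link in the external activity complex via the preceding proposition, invoke shellability of $B_u(M)$, and then use the Bj\"orner--Wachs result that links in shellable complexes are shellable. One correction: the shellability of $B_u(M)$ is due to Ardila, Castillo, and Samper \cite{ACS16}, not to Ardila and Boocher \cite{AB16}, who introduced the complex but did not establish shellability.
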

\begin{proof}
In \cite{ACS16}, Ardila, Castillo, and Sampler show that the external activity complex, $B_u(M)$, is shellable. 
Then by \cite[Prop. 10.14]{BW97}, the link of any face in $B_u(M)$ is also shellable. 
\end{proof}

\begin{ex}
  Let $M$ be the matroid from Example~\ref{ex:NonGenMatroid}, $I = \{1,2,3\}$, and $u$ be
  the weight vector associated to $w$ as described above. 
  It induces the linear order $5<4<1<2<3$ on the
  ground set of the matroid $M(\cL)$.
  
  We outline the connection between the external activity complex $B_u(M)$
  and the semi-broken circuit complex by tracking two bases 
  $B_1=\{1,3,4\}, B_2=\{2,3,5\}$ of the matroid $M(\cL)$ in the
  construction of the two simplicial complexes.   
  For each basis, we split the complement $[5]\backslash B_i$ into externally
  active and externally passive elements. (See \cite[\S 2.5]{AB16} for the definitions of externally active and passive.) 
  For $B_1$, $\{2\}$ is externally
  passive and $\{5\}$ is externally active. Then by \cite[Theorem 5.1]{AB16}, the associated facet
  of $B_u(M)$ is $F_1=x_1x_2x_3x_4y_1y_3y_4y_5.$ By deleting $\sigma =
  x_1x_2x_3y_4y_5$ from $F_1$, we obtain the facet $x_4y_1y_3$ of
  ${\rm link}_{\Delta}(\sigma)$, corresponding to the facet $\{1,3,4\}$ of $\Delta_w(M,I)$. 
  For $B_2=\{2,3,5\}$, the externally
  passive elements are the entire complement $\{1,4\}$, hence the
  associated facet of $B_u(M)$ is $F_2=x_1x_2x_3x_4x_5y_2y_3y_5$. 
  Since $F_2$ does not contain $\sigma$, it does not contribute a facet to the link of $\sigma$ in $B_u(M)$.
 \end{ex}

The connection between this simplicial complex and the semi-inverted linear space $\inv_I(\cL)$ 
is that when $w \in (\R_+)^n$ has distinct coordinates, the ideal generated by the initial forms $\{ \In_w(f_C) : \text{ $C$ is a circuit of }M(\cL)\}$ 
is the Stanley-Reisner ideal of $\Delta_w(M,I)$. In fact, the initial form of $f_C$ is  $\In_w(f_C) =\x^{b_I(C)}$.
The ideal generated by these initial forms is then the Stanley-Reisner ideal $\mathcal{I}_{\Delta_w(M,I) }  = \langle \In_w(f_C):C \in \mathcal{C}(M) \rangle$.

\section{Proof of Theorem~\ref{thm:main}}\label{sec:proof}

In this section, we prove Theorem~\ref{thm:main}. To do this, we first use a flat degeneration of $\inv_I(\cL)$ 
to establish a recursion for its degree. 

\begin{prop}\label{prop:DegreeRecurrence}
Suppose $\cL$ is a linear subspace of $\C^n$ and $I\subseteq [n]$. 
Let $D(\cL,I)$ denote the degree of the affine variety $\inv_I(\cL)$. 
\begin{itemize} 
\item[(a)] If $i \in I$ is a loop of $M(\cL)$, then $\inv_I(\cL)$ is empty and $D(\cL,I) = 0$. 
\item[(b)] If $i\in I$ is a co-loop of $M(\cL)$, then $D(\cL,I) = D(\cL/i, I\backslash i)$. 
\item[(c)] If $i\in I$ is neither a loop nor a coloop of $M(\cL)$ then
\[
D(\cL\backslash i,I\backslash i) \ + \ D(\cL/i, I\backslash i) \ \ \leq \ \ D(\cL,I).
\]
\end{itemize}
\end{prop}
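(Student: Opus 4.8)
The plan is to handle (a) and (b) by direct inspection and (c) by a flat degeneration of $\inv_I(\cL)$ in the $x_i$-direction, realized concretely as a linear projection in projective space.

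For (a), if $i\in I$ is a loop then the coordinate form $x_i$ vanishes identically on $\cL$, so $\inv_I$ is nowhere defined on $\cL$ and $\inv_I(\cL)=\emptyset$. For (b), $i$ is a coloop of $M(\cL)$ exactly when $e_i\in\cL$ (the rank of $M(\cL)\backslash i$ equals $\dim\cL-\dim(\cL\cap\C e_i)$), and then $\cL=\C e_i\oplus\cL'$ with $\cL'=\cL\cap\{x_i=0\}=\cL/i$; writing a point of $\cL$ as $se_i+x'$ with $x'\in\cL'$ gives $\inv_I(se_i+x')=\tfrac1s e_i+\inv_{I\backslash i}(x')$, so $\inv_I(\cL)=\C\times\inv_{I\backslash i}(\cL/i)$ inside $\C\times\C^{[n]\backslash i}=\C^n$, and passing to such a cylinder does not change the degree, giving $D(\cL,I)=D(\cL/i,I\backslash i)$.

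For (c) I would first reduce: I may assume $D(\cL/i,I\backslash i)>0$, so that by part (a) no element of $I\backslash i$ is a loop of $M(\cL)/i$ and in particular $\inv_I(\cL)\neq\emptyset$; if instead $D(\cL/i,I\backslash i)=0$, the claim collapses to $D(\cL\backslash i,I\backslash i)\le D(\cL,I)$, which falls out of the projection constructed below. Since $i$ is not a coloop, the projection $\pi\colon\C^n\to\C^{[n]\backslash i}$ forgetting the $i$-th coordinate restricts to an isomorphism $\cL\cong\cL\backslash i$, under which $x_i|_\cL$ corresponds to a nonzero linear form $\ell$ on $\cL\backslash i$ (nonzero because $i$ lies in some circuit), with zero locus $\cL/i$. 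Hence
\[
\inv_I(\cL)=\overline{\bigl\{\bigl(\inv_{I\backslash i}(y),\,1/\ell(y)\bigr):y\in\cL\backslash i\bigr\}}
\]
is the closure of a graph over $Z:=\overline{\inv_{I\backslash i}(\cL\backslash i)}$, so $\pi$ maps $\inv_I(\cL)$ birationally onto $Z$, where $\deg Z=D(\cL\backslash i,I\backslash i)$ and $\dim Z=\dim\cL$. Taking projective closures, $\pi$ becomes the linear projection of $\overline{\inv_I(\cL)}\subseteq\PP^n$ from the point $e_i$, still birational onto $\overline Z\subseteq\PP^{n-1}$, so the classical formula $\deg V'=\deg V-\mathrm{mult}_p V$ for the image $V'$ of a birational projection from a point $p\in V$ gives
\[
D(\cL\backslash i,I\backslash i)=\deg\overline Z=\deg\overline{\inv_I(\cL)}-\mathrm{mult}_{e_i}\overline{\inv_I(\cL)}=D(\cL,I)-\mathrm{mult}_{e_i}\overline{\inv_I(\cL)}.
\]

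It then remains to prove $\mathrm{mult}_{e_i}\overline{\inv_I(\cL)}\ge D(\cL/i,I\backslash i)$, which is the heart of the matter. For this I would compute the tangent cone of $\overline{\inv_I(\cL)}$ at $e_i$: in the affine chart $x_i\neq0$ with coordinates $u_\alpha=x_\alpha/x_i$, the variety is parametrized near $e_i$ (the origin of the chart) by $y\mapsto\bigl(\ell(y),\,(\ell(y)\,y_j)_{j\notin I},\,(\ell(y)/y_j)_{j\in I\backslash i}\bigr)$; letting $y$ tend to a generic point of the slice $\{\ell=0\}=\cL/i$ and dividing out the common factor $\ell(y)$ shows that the projectivized tangent cone contains $\overline{\inv_{I\backslash i}(\cL/i)}$, a subvariety of dimension $\dim\cL-1$ and hence a top-dimensional component of it. Therefore $\mathrm{mult}_{e_i}\overline{\inv_I(\cL)}=\deg(\text{tangent cone at }e_i)\ge\deg\overline{\inv_{I\backslash i}(\cL/i)}=D(\cL/i,I\backslash i)$, and combining with the previous display yields $D(\cL\backslash i,I\backslash i)+D(\cL/i,I\backslash i)\le D(\cL,I)$. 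The main obstacle is precisely this last step — recognizing the reciprocal linear space of the hyperplane section $x_i=0$ as a component of the tangent cone of $\overline{\inv_I(\cL)}$ at the point at infinity $e_i$, and making the rescaling limit rigorous (equivalently, identifying the flat limit of $\inv_I(\cL)$ under the one-parameter subgroup scaling $x_i$ as $[\overline Z]$ together with a residual cycle of degree $\mathrm{mult}_{e_i}\overline{\inv_I(\cL)}$).
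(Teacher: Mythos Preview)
Your argument is correct, and at bottom it is the same degeneration the paper runs, only in classical projective language rather than Gr\"obner language. The paper takes $w=e_i$, forms the initial ideal $\In_{e_i}(\J)$ of the homogenization, and by tracking explicit one-parameter families of points shows that $\V(\In_{e_i}(\J))$ contains both $\{0\}\times\inv_{I\backslash i}(\cL\backslash i)$ and $\mathbb{A}^1\times\inv_{I\backslash i}(\cL/i)$; equality of Hilbert series then gives the degree bound. You instead project $\overline{\inv_I(\cL)}\subset\PP^n$ from the point $e_i$, apply $\deg V=\deg\overline{\pi(V)}+\mathrm{mult}_{e_i}V$ for a birational projection, and bound the multiplicity by exhibiting $\inv_{I\backslash i}(\cL/i)$ in the projectivized tangent cone. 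The translation is exact: dehomogenizing $\In_{e_i}(\J)$ at $x_i=1$ yields precisely the tangent-cone ideal at $e_i$, so the paper's two components are your projection image and your tangent-cone contribution, respectively. The paper's packaging is slightly leaner in that it needs only set-theoretic containments and preservation of degree under flat degeneration, never birationality of $\pi$ or the projection-degree formula, while your version makes the geometry of secant lines through $e_i$ and limiting tangent directions more visible. The rescaling limit you flag as the main obstacle is exactly the paper's computation $\lim_{t\to0}[1:\lambda:y_2(t):\cdots:y_n(t)]=[1:\lambda:\inv_{I\backslash i}(x)]$ for $x\in\cL/i$, carried out there in full.
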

The proof of Theorem~\ref{thm:main} will show that there is actually equality in part (c). 

\begin{proof} Without loss of generality, take $i = 1$. 

(a) If $1\in I$ is a loop of $M(\cL)$ then $\cL$ is contained in the hyperplane $\{x_1=0\}$. 
Therefore the map $\inv_I$ is undefined at every point of $\cL$ and the image $\inv_I(\cL)$ is empty. 
By convention, we take the degree of the empty variety to be zero. 

(b) If $1$ is a co-loop of $M(\cL)$, then $\cL$ is a direct sum of ${\rm span}(e_1)$ and $\cL/1$, meaning that 
any element in $\cL$ can be written as $ae_1 + b$ where $a\in \C$ and $b\in \cL/1$. 
For points at which the map $\inv_I$ is defined, $\inv_I(ae_1 + b) = a^{-1}e_1 + \inv_{I\backslash 1}(b)$. 
From this, we see that $\inv_I(\cL)$ is the direct sum of ${\rm span}(e_1)$ and $\inv_{I\backslash 1}(\cL/1)$, 
implying that $\inv_{I}(\cL)$ and $\inv_{I\backslash 1}(\cL/1)$ have the same degree. 

(c) 
Let $\I$ denote the ideal of polynomials vanishing on $\inv_I(\cL)$ and 
$\J = \ol{\I}$ denote its homogenization in $\C[x_0,x_1,\hdots, x_n]$. 
Take $w = e_1 \in \R^{n+1}$ and consider $\In_w(\J)$, 
as defined in Section~\ref{subsec:Grobner}. 
We will show that the variety of $\In_w(\J)$ contains 
the image in $\PP^n$ of both $\{0\}\times \inv_{I\backslash 1}(\cL\backslash 1)$ and 
$\mathbb{A}^1(\C)\times \inv_{I\backslash 1}(\cL/ 1)$. 
Since both these varieties have dimension equal to $\dim(\cL)$, the degree of the variety of $\In_w(\J)$ is at least the sum of their degrees. 
The claim then follows by the equality of the Hilbert series of $\J$ and $\In_w(\J)$.

If $j\in I\backslash 1$ is a loop of $M(\cL)$, then $j$ is a loop of $M(\cL\backslash 1)$ and $D(\cL\backslash 1,I\backslash 1)=0$.
Otherwise the set $U_I$ is Zariski-dense in $\cL$, where $U_I$ denotes the intersection of 
$\cL$ with $(\C^*)^I\times \C^{[n]\backslash I}$. 

Let $\pi_I$ denote the coordinate projection $\C^n \rightarrow \C^I$. On $U_I$, the maps 
$\pi_{I\backslash 1}\circ \inv_I$ and $\inv_{I\backslash 1} \circ \pi_{I\backslash 1}$ are equal:
\[\pi_{I\backslash 1}(\inv_I(x))  \ \ =  \ \ \inv_{I\backslash 1}(\pi_{I\backslash 1}(x)) \ \ = \ \
\sum_{j\in I\backslash 1} x_j^{-1} e_j + \sum_{j\not\in I} x_j e_j. 
\]
In particular, the points $\inv_{I\backslash 1}(\pi_{I\backslash 1}(U_I))$ are Zariski dense in $\inv_{I\backslash 1}(\cL\backslash 1)$. 
Now let  $x$ be a point of $ \inv_I(U_I)$. 
Then $[1: x]$ belongs to the variety of $\J$ and, for every $t\in \C$, the point $(t,t^{e_1}\cdot[1:x])$ belongs to the 
variety of $\ol{\J}^{w}$, as defined in Section~\ref{subsec:Grobner}. Taking $t\rightarrow 0$, we see that $[1:0:\pi_{I\backslash 1}(x)]$ belongs to the variety of $\In_{e_1}(\J)$.

If $j\in I\backslash 1$ is a loop of $M(\cL/1)$, then $\inv_{I\backslash 1}(\cL/1)$ is empty and 
the claim follows. Otherwise the intersection $U_{I\backslash 1}$ of $\cL/1$ with $\{0\}\times (\C^*)^{I\backslash 1} \times \C^{[n]\backslash I}$ is nonempty and Zariski-dense in $\cL\cap \{x\in \C^n: x_1=0\} \cong \cL/1$. 
Let $x \in U_{I\backslash 1}$. Since $1$ is not a loop of $M(\cL)$, 
there is a point $v \in \cL$ with $v_1 = 1$. Then for any $\lambda, t\in \C^*$,
$x+ (t/\lambda)v$ belongs to $\cL$ and for all but finitely many values of $t$, $y(t) = \inv_I(x+ (t/\lambda)v)$
is defined and has first coordinate $y_1(t) = \lambda/t$. 
Then $[1: y(t)] \in \V(\J)$ and $(t, t^{e_1}\cdot [1:y(t)])$ belongs to $\V\left(\ol{\J}^w\right)$. 
Note that the limit of $t^{e_1}\cdot [1:y(t)] = [1:\lambda : y_2(t): \hdots :y_n(t)]$ as $t\rightarrow 0$ equals $[1:\lambda: \inv_{I\backslash 1}(x)]$. 
Therefore for every point $(\lambda,u)\in \mathbb{A}^1(\C)\times \inv_{I\backslash 1}(\cL/1)$, the point $[1:\lambda: u]$ belongs to $\V\left(\In_{e_1}(\J)\right)$. \end{proof}

We also need the following fact from commutative algebra, included here for completeness, 
which may be clear to readers familiar with the geometry of schemes. 
Recall that for a homogeneous ideal $J \subset \C[\x]$, the Hilbert polynomial of $J$ is 
the polynomial $h(t)$ that agrees with $\dim_\C(\C[\x]/J)_t$ for sufficiently large $t \in \N$. 
Then $ h(t) =  \sum_{i=0}^d b_i \binom{t}{d-i}$ for some $d\in \Z_{>0}$, where $b_i\in \Z$ with $b_0>0$.
In a slight abuse of notation, we say that the dimension of $J$ is $\dim(J) = d$ and the degree of $J$ is $\deg(J) = b_0$. 
The ideal $J$ is \emph{equidimensional} of dimension $d$ if $\dim(P)=d$ for every minimal associated prime $P$ of $J$.

\begin{lem} \label{lem:EquidimEquality}
Let $I\subseteq J\subseteq\C[\x]$ be equidimensional homogeneous ideals of dimension $d$. If $I$ is radical and $\deg(I)\leq \deg(J)$, then $I$ and $J$ are equal. 
\end{lem}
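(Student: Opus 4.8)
The plan is to compare the two ideals via their minimal associated primes. Since $I$ is radical, it is the intersection of its minimal primes: $I = P_1 \cap \cdots \cap P_k$, where each $P_j$ is homogeneous of dimension $d$ by equidimensionality. The degree of $I$ is then the sum $\deg(I) = \sum_j \deg(P_j)$, because the dimension-$d$ part of the Hilbert polynomial of an intersection of primes of the same dimension $d$ with pairwise intersections of strictly smaller dimension adds up; concretely $\V(I) = \V(P_1) \cup \cdots \cup \V(P_k)$ is a union of irreducible varieties of the same dimension, and degree is additive over the components of a top-dimensional decomposition.

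First I would show every $P_j$ is also a minimal prime of $J$. Since $I \subseteq J$, the variety $\V(J)$ is contained in $\V(I) = \bigcup_j \V(P_j)$. Because $J$ is equidimensional of dimension $d$, every irreducible component of $\V(J)$ has dimension $d$; being contained in the union $\bigcup_j \V(P_j)$ of $d$-dimensional irreducibles, each such component must equal some $\V(P_j)$ (an irreducible variety of dimension $d$ contained in a finite union of irreducible varieties of dimension $d$ must be one of them). Hence the set of minimal primes of $J$ is a subset $\{P_j : j \in S\}$ for some $S \subseteq \{1,\dots,k\}$, and $\sqrt{J} = \bigcap_{j \in S} P_j$.

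Next I would run the degree inequality backwards. We have $\deg(J) = \deg(\sqrt{J})$ since passing to the radical does not change dimension or leading Hilbert coefficient, and $\deg(\sqrt{J}) = \sum_{j \in S} \deg(P_j) \leq \sum_{j=1}^k \deg(P_j) = \deg(I)$, because each $\deg(P_j) > 0$. Combined with the hypothesis $\deg(I) \leq \deg(J)$, this forces $S = \{1,\dots,k\}$, so $\sqrt{J} = I$. In particular $I \subseteq J \subseteq \sqrt{J} = I$, giving $I = J$.

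The main obstacle is justifying that degree is additive over the top-dimensional components, i.e. that for homogeneous $P, Q$ of dimension $d$ one has $\deg(P \cap Q) = \deg(P) + \deg(Q)$ when $\V(P) \neq \V(Q)$ — equivalently controlling the Hilbert polynomial of $\C[\x]/(P \cap Q)$ via the exact sequence $0 \to \C[\x]/(P\cap Q) \to \C[\x]/P \oplus \C[\x]/Q \to \C[\x]/(P+Q) \to 0$ and observing that $P + Q$ has dimension $< d$ so contributes nothing to the degree-$d$ coefficient. This is standard (see e.g. the discussion of degree in \cite{GrobnerPolytope}), and the rest of the argument is a clean bookkeeping of minimal primes, so I would state the additivity as a known fact and keep the write-up short.
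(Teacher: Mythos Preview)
Your argument has a genuine gap at the step where you assert $\deg(J) = \deg(\sqrt{J})$ ``since passing to the radical does not change dimension or leading Hilbert coefficient.'' This is false in general: take $J = (x_1^2) \subset \C[x_1,x_2]$, which is homogeneous and equidimensional of dimension $1$ with $\deg(J) = 2$, while $\sqrt{J} = (x_1)$ has degree $1$. In general one only has $\deg(\sqrt{J}) \le \deg(J)$, coming from the containment $J \subseteq \sqrt{J}$; the reverse inequality encodes exactly the statement that $J$ has no embedded structure along its minimal primes, which is what you are trying to prove.

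The paper fills this gap with a localization argument that you are missing. Writing $J = Q_1 \cap \cdots \cap Q_s$ in primary decomposition with $\sqrt{Q_i} = P_i$ for $i \le u$, one picks $a \in \bigl(\bigcap_{j\neq i} P_j\bigr) \cap \bigl(\bigcap_{j\neq i}\sqrt{Q_j}\bigr)$ with $a \notin P_i$ and uses that saturation by $a$ isolates the $i$th component: $I : \langle a\rangle^\infty = P_i$ and $J : \langle a\rangle^\infty = Q_i$. The containment $I \subseteq J$ then gives $P_i \subseteq Q_i \subseteq \sqrt{Q_i} = P_i$, so $Q_i = P_i$. Only after this does the degree count $\deg(J) = \sum_{i\le u}\deg(P_i)$ become valid, and then your comparison with $\deg(I) = \sum_{i\le r}\deg(P_i)$ forces $r = u$ and hence $I = J$. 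Your outline is otherwise sound; what is missing is precisely this argument that the primary components of $J$ are already reduced.
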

\begin{proof}
Let $I = P_1 \cap \hdots \cap P_r$ and $J = Q_1 \cap \hdots \cap Q_s$ be irredundant primary decompositions of $I$ and $J$.
Without loss of generality, we can assume that $\dim(Q_i) = d$ for $1 \leq i \leq u$, and since $\V(J) \subseteq \V(I)$, the prime ideals $P_i$ can be reindexed such that $P_i=\sqrt{Q_i}$, 
implying $Q_i \subseteq P_i$. 
For all $1 \leq i \leq u$, there exists an element $a\in (\cap_{j\neq i}P_j) \cap (\cap_{j\neq i}\sqrt{Q_j})$ with $a\not\in P_i$. 
Then the saturation $I:\langle a\rangle^{\infty} = P_i $ is contained in $ J:\langle a\rangle^{\infty} = Q_i$, implying $P_i = Q_i$. 
This writes the ideal $J$ as $J=P_1\cap \ldots \cap P_{u}\cap Q_{u+1} \cap \hdots \cap Q_s$. 
The degree of an ideal is equal to the sum of the degrees of the top dimensional ideals in its primary decomposition, hence
\[
\deg(I) = \sum_{i=1}^r \deg(P_i) \ \ \ \text{ and } \ \ \ \
\deg(J) = \sum_{i=1}^u \deg(Q_i)=\sum_{i=1}^u \deg(P_i).
\]
The assumption that $\deg(I)\leq \deg(J)$ implies that $r=u$, which gives the reverse containment
$I = P_1 \cap \hdots \cap P_u \supseteq J$.
\end{proof}

\begin{proof}[Proof of Theorem~\ref{thm:main}] 
We proceed by induction on $|I|$. If $|I|=0$, then $\inv_I(\cL)$ is just the linear space $\cL$. 
Then Theorem~\ref{thm:main} reduces to the statement that the linear forms supported on circuits 
form a universal Gr\"obner basis for $\I(\cL)$. See e.g. \cite[Prop. 1.6]{GrobnerPolytope}.

Now take $|I|\geq 1$,  $w\in (\R_+)^n$ with distinct coordinates, and let $M$ denote the matroid $M(\cL)$.
If $M$ has a loop $i$ in $I$, then for the circuit $C = \{i\}$, the circuit polynomial $f_C$ equals $1$, 
which is a Gr\"obner basis for the ideal of polynomials vanishing on the empty set $\inv_I(\cL)$. 
Therefore we may suppose that $M$ has no loops in $I$, in which case $\inv_I(\cL)$ is a $d$-dimensional 
affine variety of degree $D(\cL,I)$. 

Let $\Delta$ denote the $I$-broken circuit complex $\Delta_w(M,I)$
defined in Section~\ref{sec:SimplicialComplex} and let $\Delta_0$ denote the simplicial complex on elements $\{0,\hdots, n\}$ 
obtained from $\Delta$ by coning over the vertex $0$. Let $\I_{\Delta_0}$ denote the Stanley-Reisner ideal 
of $\Delta_0$, as in Section~\ref{subsec:SRI}.

Let $\I \subset \C[\x]$ be the ideal of polynomials vanishing on $\inv_I(\cL)$ 
and define the ideal $\J\subset \C[x_0, x_1, \hdots, x_n]$ to be its homogenization with respect to $x_0$. 
Since $\inv_I(\cL)$ is the image of an irreducible variety under a rational map, it is also irreducible. 
It follows that the ideals $\I$ and $\J$ are prime. 
For a circuit polynomial $f_C$, its homogenization $\ol{f_C}$ belongs to $\J$ and since $w\in (\R_+)^n$, 
\[
\In_{(0,w)}(\ol{f_C})  =  \In_{w}(f_C) = \begin{cases}
 a_k \x^{C\backslash k}   & \text{ if $C \subseteq I $ and $k = {\rm argmin}\{w_j: j\in C\}$ } \\ 
 a_k  \x^{C\cap I\cup k}  & \text{ if $C \not\subseteq I $ and $k = {\rm argmax}\{w_j: j\in C\backslash I\}$. } \\ 
\end{cases}
\]
Up to a scalar multiple, $\In_{w}(f_C)$ equals the 
square-free monomial corresponding to the $I$-broken circuit of $C$, namely $\x^{b_I(C)}$. 
It follows that 
\[
\langle \In_{w}(f_C) \ : \ C \in \mathcal{C}(M) \rangle  \ = \ \I_{\Delta}
\ \ 
\text{ and } 
\ \ 
\langle \In_{(0,w)}(\ol{f_C}) \ : \ C \in \mathcal{C}(M) \rangle  \ = \ \I_{\Delta_0}. 
\]
From this we see that $\I_{\Delta_0} \subseteq \In_{(0,w)}(\J)$.

Let $i = {\rm argmax}\{w_j : j\in I\}$.  By the inductive hypothesis, $D(\cL\backslash i, I\backslash i)$ and $D(\cL/ i, I\backslash i)$
are the number of facets of $\Delta_w(M\backslash i, I\backslash i)$ and $\Delta_w(M/i, I\backslash i)$, respectively. 
Therefore by Theorem~\ref{thm:Complex}, $\Delta$ and thus $\Delta_0$ each have 
$D(\cL/ i, I\backslash i)$ facets if $i$ is a coloop of $M$ and $D(\cL\backslash i, I\backslash i) + D(\cL/ i, I\backslash i)$ facets otherwise. 
Then by Proposition~\ref{prop:DegreeRecurrence}, $\Delta_0$ has at most $D(\cL, I)$ facets and the Stanley-Reisner ideal 
$\I_{\Delta_0}$ has degree $\leq D(\cL, I)$.

Since $\Delta_0$ is a pure simplicial complex of dimension $d$, $\I_{\Delta_0}$ is an 
equidimensional ideal of  dimension $d$. 
As $\J$ is a prime $d$-dimensional ideal, its initial ideal $\In_{(0,w)}(\J)$  is equidimensional of the same 
dimension, see \cite[Lemma 2.4.12]{tropBook}.  

The ideals $\I_{\Delta_0}$ and $\In_{(0,w)}(\J)$ then satisfy the hypotheses of Lemma~\ref{lem:EquidimEquality}, 
and we conclude that they are equal.  By \cite[Prop. 2.6.1]{tropBook}, restricting to $x_0=1$ 
gives that 
\[
\I_{\Delta} \ \ = \ \ \langle \In_w(f_C) \ : \ C\in \mathcal{C}(M) \rangle \ \  = \ \ \In_w(\I).  
\]
As this holds for every $w\in (\R_+)^n$ with distinct coordinates, it will also hold for arbitrary $w \in (\R_{\geq 0})^n$ (see \cite[Prop. 1.13]{GrobnerPolytope}).
It follows from \cite[Cor. 1.9, 1.10]{GrobnerPolytope} that the circuit polynomials $\{f_C : C\in \mathcal{C}(M)\}$ form a universal Gr\"obner basis for $\I$. 
\end{proof}

\begin{ex} \label{ex:NonGen} Consider the $3$-dimensional linear space in $\C^5$: 
\[ \cL  \ \ = \ \ {\rm rowspan}\begin{pmatrix} 
1&0&0&1&1\\
0&1&0&1&0\\
0&0&1&0&1
\end{pmatrix}.\]
The circuits of the matroid $M(\cL)$ are $\mathcal{C} = \{124, 135, 2345\}$. Take $I = \{1,2,3\}$. Then 
\[ f_{124} = x_1 + x_2 - x_1x_2x_4, f_{135} =x_1+x_3 -x_1x_3x_5,  \text{ and }  f_{2345} = x_2 - x_3+x_2x_3x_4 - x_2x_3x_5. \]
If $w\in (\R_+)^5$ with $w_1<\hdots < w_5$, then the ideal $\langle \In_w(f_C): C\in \mathcal{C}\rangle$ is 
$\langle x_1x_2x_4, x_1x_3x_5, x_2x_3x_5 \rangle$.
The  simplicial complex $\Delta_w(M,I)$ is $2$-dimensional and has seven facets:
\[ {\rm facets}(\Delta_w(M,I))  \ \ = \ \ \{ 123, 125, 134, 145, 234, 245, 345   \}.\]
Indeed, the variety of $\langle x_1x_2x_4, x_1x_3x_5, x_2x_3x_5 \rangle$ is the union the seven coordinate linear spaces 
${\rm span}\{e_i, e_j, e_k\}$ where $\{i,j,k\}$ is a facet of $\Delta_w(M,I)$. 

Interestingly, it is not true that the homogenizations $\ol{f_C}$ form a universal Gr\"obner basis for the homogenization $\ol{\I}$. 
Indeed, consider the weight vector $(2,0,0,1,1,1)$. The ideal generated by the initial forms of circuit polynomials  
$\langle \In_{w}(\ol{f_C}):C\in \mathcal{C}\rangle$ is $\langle
x_0^2x_1+x_0^2x_2,x_0^2x_3\rangle$, whereas 
$\In_{w}(\ol{\I})=\langle x_2x_3x_4-x_1x_3x_5-x_2x_3x_5,x_0^2x_1+x_0^2x_2,x_0^2x_3 \rangle$. 
Nevertheless, upon restriction to $x_0=1$, the two ideals become equal. 
\end{ex}

\begin{cor}\label{cor:degreeFacets}
If $\dim(\cL) = d$, then the \emph{affine} Hilbert series of the ideal $\I \subseteq \C[\x]$ of polynomials vanishing on $\inv_I(\cL)$
 is
\[
\sum_{m=0}^{\infty} \dim_{\C}(\C[\x]_{\leq m}/\I_{\leq m}) \ t^m   
=   \frac{1}{(1-t)^{d+1}} \sum_{i=0}^{d}f_{i-1} t^i(1-t)^{d-i}
=   \frac{ h_0 + h_1 t + \hdots + h_dt^d}{(1-t)^{d+1}}.
\]
where $(f_{-1}, \hdots, f_{d-1})$ and $(h_0,\hdots, h_d)$ are the $f$- and  $h$-vectors of $\Delta_w(M,I)$. 
In particular, its degree is the number of facets $f_{d-1} = h_0 + h_1 + \hdots + h_d$. 
\end{cor}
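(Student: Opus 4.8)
The plan is to deduce Corollary~\ref{cor:degreeFacets} from Theorem~\ref{thm:main} together with standard facts about Hilbert series of Stanley-Reisner rings. By Theorem~\ref{thm:main}, for $w \in (\R_+)^n$ with distinct coordinates, $\In_w(\I) = \I_{\Delta_w(M,I)}$. As recalled in Section~\ref{subsec:Grobner}, passing to the initial ideal preserves the Hilbert series; more precisely, for the \emph{affine} Hilbert series one works with the homogenization $\J = \ol{\I}$ and its degeneration $\In_{(0,w)}(\J) = \I_{\Delta_0}$, where $\Delta_0 = {\rm cone}(\Delta_w(M,I),0)$, as established in the proof of Theorem~\ref{thm:main}. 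Thus the affine Hilbert function of $\I$ equals $\dim_\C(\C[x_0,\x]_m / (\I_{\Delta_0})_m)$, which is the (projective) Hilbert function of the Stanley-Reisner ring of $\Delta_0$.

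The first step is therefore to record that
\[
\sum_{m=0}^\infty \dim_\C(\C[\x]_{\leq m}/\I_{\leq m})\, t^m \ = \ \sum_{m=0}^\infty \dim_\C(\C[x_0,\x]_m/(\I_{\Delta_0})_m)\, t^m,
\]
the standard identification of the affine Hilbert series of $\I$ with the projective one of its homogenization. The second step is to invoke the classical formula for the Hilbert series of a Stanley-Reisner ring (see \cite[Chapter II]{StanleyCCABook}): for a simplicial complex $\Gamma$ on $N$ vertices with $f$-vector $(f_{-1}, f_0, \dots, f_{e-1})$, one has
\[
\sum_{m\geq 0}\dim_\C(\C[x_1,\dots,x_N]_m/(\I_\Gamma)_m)\, t^m \ = \ \sum_{i=0}^{e}\frac{f_{i-1}\, t^i}{(1-t)^i}.
\]
The third step is the combinatorial observation that coning over a new vertex shifts dimensions by one and multiplies the $f$-polynomial by $(1+x)$: if $\Delta$ has $f$-vector $(f_{-1},\dots,f_{d-1})$, then ${\rm cone}(\Delta,0)$ has $f$-vector with $i$-faces counted by $f_{i-1} + f_{i-2}$. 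Substituting the $f$-vector of $\Delta_0$ into the Stanley-Reisner formula and simplifying — clearing denominators to a common $(1-t)^{d+1}$ — yields exactly $\frac{1}{(1-t)^{d+1}}\sum_{i=0}^d f_{i-1}\,t^i(1-t)^{d-i}$, where $(f_{-1},\dots,f_{d-1})$ is the $f$-vector of $\Delta_w(M,I)$ itself. The equality with $\frac{h_0 + \cdots + h_d t^d}{(1-t)^{d+1}}$ is then the definition of the $h$-vector, and setting $t \to 1$ in the numerator (or reading off the degree from the leading term of the Hilbert polynomial, as in Section~\ref{subsec:SRI}) gives $\deg(\I) = f_{d-1} = h_0 + \cdots + h_d$, using Corollary~\ref{cor:DeltaDim} to know $\Delta_w(M,I)$ is pure of dimension $d-1$ so that $f_{d-1}$ is genuinely the number of facets.

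I do not expect any serious obstacle here: every ingredient is either Theorem~\ref{thm:main}, the invariance of Hilbert series under Gröbner degeneration (already used and cited in the proof of Theorem~\ref{thm:main}), or textbook commutative algebra. The only point requiring a little care is bookkeeping with the extra cone variable $x_0$ — making sure the affine-to-projective passage and the coning operation on $\Delta$ interact correctly so that the final formula is expressed in terms of the $f$-vector of $\Delta_w(M,I)$ rather than that of $\Delta_0$ — but this is a routine manipulation of the generating-function identities above rather than a genuine difficulty.
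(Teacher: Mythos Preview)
Your proposal is correct and follows essentially the same approach as the paper: homogenize, pass to the initial ideal $\In_{(0,w)}(\ol{\I}) = \I_{\Delta_0}$, and invoke the standard Stanley--Reisner Hilbert series formula. The only cosmetic difference is that the paper handles the cone by noting that $\I_{\Delta}$ and $\I_{\Delta_0}$ have the same monomial generators (so their Hilbert series differ by a factor of $1/(1-t)$), whereas you compute the $f$-vector of $\Delta_0$ explicitly and simplify; both routes arrive at the same formula with equal ease.
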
      
\begin{proof}
The affine Hilbert series of $\I$ equals the classical Hilbert series of its homogenization $\ol{\I}$, 
which equals the Hilbert series of $\In_{(0,w)}(\ol{\I})$ for any $w\in \R^{n}$. 
When the coordinates of $w$ are distinct and positive, $\In_{(0,w)}(\ol{\I})$ is the Stanley-Reisner ideal of 
$\Delta_0 = {\rm cone}(\Delta_w(M,I),0)$.  Since the Stanley-Reisner ideals of $\Delta = \Delta_w(M,I)$ and $\Delta_0$ 
are generated by the same square-free monomials, their Hilbert series differ by a factor of $1/(1-t)$. 
The result then follows from well known formulas for the Hilbert series of $\I_\Delta$, \cite[Ch. 1]{CCABook}. 
\end{proof}

The proof of Theorem~\ref{thm:main} shows that there is equality in Proposition~\ref{prop:DegreeRecurrence}(c), namely that 
if $i\in I$ is neither a loop nor a coloop of $M(\cL)$, then the degree $D(\cL, I) $ satisfies $D(\cL, I) = D(\cL\backslash i, I\backslash i ) + D(\cL/ i, I\backslash i )$. 
From this we can derive an explicit formula for the degree of $\inv_I(\cL)$ in the uniform matroid case.

\begin{cor}\label{cor:genericDegree}
For a generic $d$-dimensional linear space $\cL \subseteq \C^n$ and $I\subseteq [n]$ of size $|I|=k$, the degree of $\inv_I(\cL)$ equals  
\[
D(\cL,I)= \sum_{j=k+d - n}^{d}\binom{k}{j} -\binom{k-1}{d} ,
\]  
where we take $\binom{a}{b}=0$ whenever $a < 0$ or $b<0$.  In particular, for $n \geq k+d$, the degree only depends on $d$ and $k$. 
\end{cor}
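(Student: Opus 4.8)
The plan is to reduce the problem to the uniform matroid, extract from the proof of Theorem~\ref{thm:main} a deletion--contraction recursion for the generic degree, and then verify by a short binomial computation that the stated formula is its unique solution with the correct boundary values.

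First I would note that for generic $\cL$ the matroid $M(\cL)$ is the uniform matroid $U_{d,n}$, and that the degree $D(\cL,I)$ of $\inv_I(\cL)$ depends only on $n$, $d$, and $k=|I|$: by the permutation symmetry of $U_{d,n}$ one may take $I=\{1,\ldots,k\}$, and $D(\cL,I)$ is constant on a Zariski-dense open subset of the Grassmannian $G(d,n)$; write $D(n,d,k)$ for this value. I also need the standard fact that $\cL$ may be chosen generic enough that $\cL\backslash i$ is simultaneously a generic $d$-dimensional subspace of $\C^{n-1}$ (when $d>0$) and $\cL/i$ is a generic $(d-1)$-dimensional subspace of $\C^{n-1}$, so that the recursion stays inside the generic family. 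The boundary values are then: $D(n,d,0)=1$ since $\inv_I(\cL)=\cL$ is a linear space; $D(n,0,k)=0$ for $k\ge 1$ since any $i\in I$ is a loop of $M(\cL)$, so $\inv_I(\cL)=\emptyset$ by Proposition~\ref{prop:DegreeRecurrence}(a); and $D(n,n,k)=1$ since coordinate inversion is a birational automorphism of $\C^n$, so $\inv_I(\C^n)=\C^n$ (equivalently, $i\in I$ is a coloop and one iterates Proposition~\ref{prop:DegreeRecurrence}(b)). For $0<d<n$ and $k\ge 1$, every $i\in I$ is neither a loop nor a coloop of $U_{d,n}$, so the equality in Proposition~\ref{prop:DegreeRecurrence}(c) that is established in the proof of Theorem~\ref{thm:main} yields
\[
D(n,d,k) \ = \ D(n-1,d,k-1) \ + \ D(n-1,d-1,k-1).
\]

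Finally I would prove, by induction on $k$, that $D(n,d,k)$ equals
\[
F(n,d,k) \ = \ \sum_{j=k+d-n}^{d}\binom{k}{j} \ - \ \binom{k-1}{d},
\]
by checking that $F$ satisfies the same boundary values and the same recursion. Applying Pascal's rule to the correction term, $\binom{k-2}{d}+\binom{k-2}{d-1}=\binom{k-1}{d}$, reduces the recursion for $F$ to the identity $\sum_{j=k+d-n}^{d}\binom{k}{j}=\sum_{j=k+d-n}^{d}\binom{k-1}{j}+\sum_{j=k+d-n-1}^{d-1}\binom{k-1}{j}$; after shifting the index in the second sum this is exactly the term-by-term Pascal relation $\binom{k}{j}=\binom{k-1}{j}+\binom{k-1}{j-1}$. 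The boundary checks at $k=0$, $d=0$, and $d=n$ are immediate from the convention $\binom{a}{b}=0$ for $a<0$ or $b<0$. The final sentence of the corollary then follows because $k+d-n\le 0$ whenever $n\ge k+d$, so the lower summation limit may be replaced by $0$ and $n$ drops out of the formula.

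I expect the only delicate point to be the genericity bookkeeping: arranging for a single $\cL$ to realize the generic degree while having generic deletion and contraction in $\C^{n-1}$. Everything else --- identifying that $i\in I$ is a loop of $U_{d,n}$ exactly when $d=0$ and a coloop exactly when $d=n$, and the binomial induction --- is routine.
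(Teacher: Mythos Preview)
Your proposal is correct and follows essentially the same approach as the paper: induction on $k$, the boundary cases $k=0$, $d=0$, $d=n$, the deletion--contraction recursion from the equality case of Proposition~\ref{prop:DegreeRecurrence}(c), and Pascal's rule applied term by term. The only remark is that your ``delicate point'' is not actually delicate: by Corollary~\ref{cor:degreeFacets} the degree $D(\cL,I)$ depends only on the matroid $M(\cL)$, and the deletion and contraction of $U_{d,n}$ at any element are $U_{d,n-1}$ and $U_{d-1,n-1}$, so no analytic genericity bookkeeping is needed.
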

\begin{proof}
By assumption $k,d,n$ satisfy the inequalities $0\leq  k \leq n$ and $0\leq  d \leq n$.
We proceed by induction on $k$. In the extremal cases, $D(\cL, I)$ satisfies 
\[
D(\cL,I) \ = \ 
\begin{cases}
1 & \text{if }k=0, \\
1 & \text{if } d=n, \\
0 & \text{if }d= 0 \text{ and } k \geq 1.  \\
\end{cases}
\]
Indeed, if $I = \emptyset$, then $\inv_I(\cL) = \cL$ and $D(\cL,I) = 1$. If $d=n$, then $\inv_I(\cL)$ is all of $\C^n$ and $D(\cL,I) = 1$. 
Finally, if $d=0$ and $|I|\geq 1$, then $n\geq |I| \geq 1$, and $\cL= \{(0,\hdots, 0)\}$ in $\C^n$. The map $\inv_I$ is not defined at this point so $\inv_I(\cL)$
is empty and thus has degree $0$. 

Suppose $k\geq 1$ and $0 <d < n$. 
Since any $i\in I$ is neither a loop nor a coloop, $D(\cL, I) $ equals $D(\cL\backslash i, I\backslash i ) + D(\cL/ i, I\backslash i )$ by Proposition~\ref{prop:DegreeRecurrence}(c) and the proof of Theorem~\ref{thm:main}. Recall that $\cL\backslash i$ and $\cL/i$ are subspaces in $\C^{n-1}$ of dimensions $d$ and $d-1$, respectively. 
Since $|I\backslash i| = k-1$, by induction we get that 
\begin{align*}
 D(\cL\backslash i, I\backslash i ) &= \sum_{j=k+d - n}^{d}\binom{k-1}{j} -\binom{k-2}{d},
 \text{ and } \\
  D(\cL/i, I\backslash i ) &=  \sum_{j=k+d - n-1}^{d-1}\binom{k-1}{j} -\binom{k-2}{d-1}.
\end{align*}
Since $\binom{k-1}{j} + \binom{k-1}{j-1}  = \binom{k}{j}$ and $\binom{k-2}{d} + \binom{k-2}{d-1}= \binom{k-1}{d}$, 
the sum $D(\cL\backslash i, I\backslash i ) + D(\cL/ i, I\backslash i )$ is the desired formula for $D(\cL, I)$. 
\end{proof}

\begin{ex} The number of facets of the complex $\Delta_w(M,I)$ gives the degree $D(\cL,I)$ and if $M$ is the uniform matroid 
of rank $d$ on $[n]$, we can write out these facets explicitly.  Let $w = (1,\hdots, n)$ and consider $I = \{1,\hdots, k\}$. 
If $k\leq d$, no circuit is contained in the inverted set $I$, implying that every broken circuit has the form 
$(C\cap I) \cup \max\{C\backslash I\}$. Then every facet of $\Delta_w(M,I)$ has the form $S \cup \{k+1, \hdots,  k+d-j\}$ 
where $S\subseteq I$ and $|S| = j \leq d$. For fixed $j$, the number of possibilities are $\binom{k}{j}$, and the
constraints on $j$ are $k+ d - j \leq n$ and $0 \leq j \leq k \leq d$.  
If $k > d$, then every subset of $\{2,\hdots, k\}$ of size $d$ is an $I$-broken circuit. 
From the list of facets $S \cup \{k+1, \hdots,  k+d-j\}$, we remove those for which $S\subset \{2,\hdots, k\}$ 
and $|S| = d$, of which there are $\binom{k-1}{d}$.  
\end{ex}

\section{Supports }\label{sec:Supports}
In this section, we characterize the intersection of the variety $\inv_I(\cL)$ with the coordinate hyperplanes. 
These are exactly the points in the closure of, but not the actual image of, the map $\inv_I$. 
Given a point ${\bf p}\in \C^n$, its support is the set of indices of its nonzero coordinates: $\supp({\bf p}) = \{i : p_i \neq 0\}$. 
For a subset $S\subseteq [n]$, we will use $\C^S$ to denote the set of points ${\bf p}$ with $\supp({\bf p}) \subseteq S$
and $\ol{S}$ to denote the complement $[n]\backslash S$.

\begin{thm} \label{thm:supp}
Suppose that the matroid $M=M(\cL)$ has no loops in $I$.
For $S\subseteq [n]$, let $T = S\cup \ol{I}$. 
If $T$ is a flat of $M$, then the restriction of $\inv_I(\cL)$ to $\C^S$ is given by 
\[
\inv_I(\cL)\cap \C^S \ \ = \ \ \inv_{S\cap I}\left(\pi_{T}(\cL)\cap \C^S\right),
\] 
where $\pi_T$ denotes the coordinate projection $\C^n \rightarrow \C^T$. 
Moreover,  $\supp(\p) = S$ for some $\p\in \inv_I(\cL)$ if and only if 
$T$ is a flat of $M$ and $T\backslash S$ is a flat of $M|_T$. 
\end{thm}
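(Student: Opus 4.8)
The plan is to prove the two claimed statements in sequence, handling first the equality $\inv_I(\cL)\cap\C^S=\inv_{S\cap I}(\pi_T(\cL)\cap\C^S)$ under the assumption that $T=S\cup\ol I$ is a flat of $M$, and then using it to characterize exactly which $S$ arise as supports. Throughout I will freely use Theorem~\ref{thm:main}: on $\inv_I(\cL)$ the circuit polynomials $f_C$ cut out the ideal $\I$, and for generic positive weight $w$ the initial ideal is the Stanley–Reisner ideal of $\Delta_w(M,I)$.

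\medskip
\noindent\textit{Step 1: the variety on a coordinate subspace.} First I would describe $\inv_I(\cL)\cap\C^S$ set-theoretically. A point $\p$ with $\supp(\p)\subseteq S$ lies in $\inv_I(\cL)$ iff $f_C(\p)=0$ for every circuit $C$. Looking at the monomials of $f_C$ in \eqref{eq:fC_def}, I would observe that when we substitute $x_j=0$ for $j\notin S$, a monomial survives iff its support avoids $\ol S$; since $T=S\cup\ol I$, the point of this bookkeeping is that circuits $C$ meeting $\ol T=\ol S\cap I$ contribute either nothing or force a coordinate of $\p$ indexed in $I\setminus S$ — which is zero — so those $f_C$ vanish automatically or reduce. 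The condition that $T$ is a flat of $M$ is exactly what guarantees that the surviving equations are precisely those coming from circuits of $M|_T$, i.e.\ from linear forms vanishing on $\pi_T(\cL)$ (recall $M(\pi_T(\cL))=M|_T$ since $T$ flat means no new dependencies are created — more precisely $M|_T = M/\ol T$ up to the flat condition; I'd state the matroid fact carefully here). Restricting those forms and clearing the $I$-coordinates gives exactly $\inv_{S\cap I}(\pi_T(\cL)\cap\C^S)$. I would argue both inclusions at the level of defining equations, using that the circuits of $M|_T$ generate (as linear forms) all linear forms on $\cL$ with support in $T$, together with Theorem~\ref{thm:main} applied to the smaller linear space $\pi_T(\cL)\subseteq\C^T$ with inverted set $S\cap I$.

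\medskip
\noindent\textit{Step 2: which $S$ are supports.} For the ``moreover'' clause, suppose $\p\in\inv_I(\cL)$ has $\supp(\p)=S$. Since $\p$ must satisfy every $f_C$, and looking at monomial supports as in Step 1, I would first show $T=S\cup\ol I$ is forced to be a flat: if not, there is a circuit $C\subseteq T\cup\{i\}$ with $i\notin T$, and the corresponding $f_C$, restricted to $\C^S$, becomes a monomial (the one indexed by the $I$-coordinates of $C$ lying in $S$) that cannot vanish on a point of full support $S$, a contradiction. Then, working inside $\pi_T(\cL)\cap\C^S=\inv_{S\cap I}^{-1}$-image as identified in Step~1, I would translate the support condition on $\p$ into a support condition for the \emph{non-inverted} linear space $\pi_T(\cL)$: namely that there is a point of $\pi_T(\cL)$ whose support is exactly $T\setminus(T\setminus S)=S$ on $I\cap T$ and whose $\ol I$-coordinates are free, which by the standard fact about supports of points in a linear space (a point of support exactly $S'$ exists iff $S'$ is the complement of a flat, equivalently $\ol{S'}$ is a flat) amounts to $T\setminus S$ being a flat of $M|_T$. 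Conversely, given both flat conditions, I would construct $\p$ explicitly: take a generic point of $\pi_T(\cL)$ vanishing exactly on the coordinates of $T\setminus S$ (exists since $T\setminus S$ is a flat of $M|_T$), which lies in $\C^S$ and has full support there, apply $\inv_{S\cap I}$, and check its support is $S$ — genericity ensures no accidental cancellation among the $\pm\x^{\cdots}$ terms.

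\medskip
\noindent\textit{Main obstacle.} The delicate point is the matroid bookkeeping in Step~1 that turns ``$T$ is a flat'' into the precise statement that the equations surviving restriction to $\C^S$ are generated by $\{f_C : C\in\CC(M|_T)\}$ — one must be careful that circuits of $M$ not contained in $T$ really do yield either the zero polynomial or a redundant monomial equation on $\C^S$, and that no defining equation of $\inv_{S\cap I}(\pi_T(\cL)\cap\C^S)$ is lost. I expect this is where the flatness hypothesis is used in an essential, not cosmetic, way, and where a naïve argument could have a gap. Once that is pinned down, Step~2 is largely a translation through Step~1 plus the classical ``support = complement of a flat'' lemma for linear spaces, applied to $\pi_T(\cL)$ with matroid $M|_T$.
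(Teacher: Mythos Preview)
Your approach is essentially the paper's: both use Theorem~\ref{thm:main} to reduce to analyzing which circuit polynomials $f_C$ survive restriction to $\C^S$, with the flat hypothesis on $T$ ensuring no circuit meets $\ol T$ in exactly one element, and both handle the support characterization via the standard ``complement of a flat'' criterion applied inside $\pi_T(\cL)$. The paper organizes your Step~1 more cleanly by factoring through the intermediate subspace $\C^T$: first a lemma handles $\inv_I(\cL)\cap\C^T=\inv_{T\cap I}(\pi_T(\cL))$ (using $\ol I\subseteq T$ and $T$ a flat), then a second lemma restricts further from $\C^T$ to $\C^S$ via $\inv_{T\cap I}(\pi_T(\cL))\cap\C^S=\inv_{S\cap I}(\pi_T(\cL)\cap\C^S)$ (using only $S\cap I=T\cap I\subseteq S$, no flat hypothesis needed). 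This separation makes the role of flatness transparent --- it enters only in the first restriction, solely to rule out $|C\cap\ol T|=1$.

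One slip to correct: your parenthetical ``$M|_T=M/\ol T$ up to the flat condition'' is wrong and should be dropped. The matroid of $\pi_T(\cL)$ is always the restriction $M|_T=M\backslash\ol T$, regardless of whether $T$ is a flat; restriction and contraction by $\ol T$ do not coincide even when $T$ is a flat. What flatness of $T$ actually buys is purely at the circuit level: every circuit of $M$ is either contained in $T$ or meets $\ol T$ in at least two elements, and since $\ol T\subseteq I$, in the latter case every monomial of $f_C$ is divisible by some $x_j$ with $j\in\ol T\subseteq\ol S$, so $f_C$ vanishes identically on $\C^S$.
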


We build up to the proof of Theorem~\ref{thm:supp} by considering the cases $\ol{I} \subseteq S$ and 
$I \subseteq S$. 

\begin{lem} \label{lem:pi}
If $S\subseteq [n]$ is a flat of $M$ with $\ol{I} \subseteq S$, then 
\[
\inv_I(\cL)\cap \C^S \ \ = \ \ \inv_{S\cap I}\left(\pi_{S}(\cL)\right),
\] 
where $\pi_S$ denotes the coordinate projection $\C^n \rightarrow \C^S$. 
\end{lem}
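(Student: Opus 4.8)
The plan is to prove the two inclusions separately, exploiting the fact that when $\ol{I}\subseteq S$, the coordinates outside $I$ are exactly among the coordinates of $S$, so that on the relevant locus $\inv_I$ and $\inv_{S\cap I}\circ\pi_S$ interact cleanly. First I would establish the easy inclusion $\inv_I(\cL)\cap\C^S \subseteq \inv_{S\cap I}(\pi_S(\cL))$ on the dense torus locus: let $U_I = \cL\cap((\C^*)^I\times\C^{\ol I})$, which is Zariski-dense in $\cL$ since $M$ has no loops in $I$. On $U_I$, the map $\inv_I$ is defined, and for $x\in U_I$ one has $\pi_S(\inv_I(x)) = \inv_{S\cap I}(\pi_S(x))$, because the coordinates in $\ol I \subseteq S$ are untouched and the coordinates in $S\cap I$ are inverted. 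Hence $\pi_S(\inv_I(U_I))\subseteq \inv_{S\cap I}(\pi_S(\cL))$; taking Zariski closures and intersecting with $\C^S$ gives the containment for the points of $\inv_I(\cL)$ that actually have support inside $S$. The subtlety is that $\inv_I(\cL)\cap\C^S$ may also contain limit points not in the image of $\inv_I$, so I would argue that any such point, being in the closure of $\inv_I(U_I)$, is a limit of points $\inv_I(x^{(k)})$ with $x^{(k)}\to x^{(\infty)}$; since the $S$-coordinates of these images have finite limits (they lie in $\C^S$) and $\pi_S\circ\inv_I = \inv_{S\cap I}\circ\pi_S$ on $U_I$, the limit lies in $\overline{\inv_{S\cap I}(\pi_S(\cL))} = \inv_{S\cap I}(\pi_S(\cL))$.

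For the reverse inclusion $\inv_{S\cap I}(\pi_S(\cL))\subseteq \inv_I(\cL)\cap\C^S$, the key point is that $S$ is a flat of $M$: this means the linear forms vanishing on $\cL$ that are supported inside $S$ are exactly the linear forms vanishing on the restriction $\cL|_S = \pi_S(\cL)$ (equivalently $M|_S = M(\pi_S(\cL))$ and no circuit of $M$ meets both $S$ and $\ol S$ in a way that survives projection). I would use Theorem~\ref{thm:main}: the ideal of $\inv_{S\cap I}(\pi_S(\cL))$ in $\C[x_i : i\in S]$ is generated by the circuit polynomials $f_C$ for circuits $C$ of $M|_S = M(\cL)|_S$, and every such circuit is a circuit of $M$ contained in $S$, so the corresponding $f_C$ (now as an element of $\C[\x]$, depending only on variables in $S$) vanishes on $\inv_I(\cL)$ as well — indeed $f_C(\x) = \x^{C\cap I}\ell_C(\inv_I(\x))$ and $\ell_C$ vanishes on $\cL$, so $f_C$ vanishes on $\inv_I(U_I)$ and hence on $\inv_I(\cL)$. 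Thus every point of $\inv_{S\cap I}(\pi_S(\cL))\subseteq\C^S$ satisfies all the defining equations of $\inv_I(\cL)$ restricted to $\C^S$, giving the containment. Here one must be careful that the ideal of $\inv_I(\cL)\cap\C^S$ is generated by $\I + \langle x_j : j\in\ol S\rangle$ and that, modulo $\langle x_j : j\in\ol S\rangle$, the circuit polynomials $f_C$ with $C\not\subseteq S$ either vanish or are already implied; using $S$ a flat ensures the surviving generators are exactly the $f_C$ with $C\subseteq S$.

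The main obstacle I anticipate is the bookkeeping around closure points versus image points: proving the first inclusion requires knowing that $\inv_I(\cL)\cap\C^S$ contains no "extra" components beyond $\overline{\pi_S(\inv_I(U_I))}$, which is really a statement that $S$ being a flat controls the degeneration. I would handle this by combining the universal Gröbner basis from Theorem~\ref{thm:main} with the observation that setting $x_j = 0$ for $j\in\ol S$ in a circuit polynomial $f_C$ with $C\not\subseteq S$ kills it unless $C\cap\ol S\subseteq I$ — and since $\ol I\subseteq S$, $C\cap\ol S = C\cap(\ol S\cap I)$, and one checks via the flat condition that such $C$ contribute nothing new. An alternative, cleaner route would be to invoke Theorem~\ref{thm:supp} structure in reverse, but since that theorem is proved using this lemma, I would instead give the direct ideal-theoretic comparison above, taking care that both ideals in question are radical (both varieties are images of linear spaces under $\inv$, hence the relevant ideals are prime by the irreducibility argument in the proof of Theorem~\ref{thm:main}) so that equality of varieties suffices.
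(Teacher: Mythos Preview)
Your plan contains the right ingredients but is significantly more complicated than the paper's argument, and the middle paragraph has a logical misstep. The paper does not split into two inclusions at all: it applies Theorem~\ref{thm:main} to \emph{both} sides and compares generators directly. The key observation is that since $S$ is a flat, every circuit $C$ of $M$ has $|C\cap\ol S|\neq 1$; combined with $\ol S\subseteq I$, this means that whenever $C\not\subseteq S$ we have $|C\cap I|\geq 2$, so every monomial of $f_C$ retains at least one factor $x_j$ with $j\in\ol S$, and hence $f_C$ vanishes identically on $\C^S$. The remaining circuit polynomials, those with $C\subseteq S$, are precisely the circuit polynomials of $M|_S=M(\pi_S(\cL))$ with respect to $S\cap I$. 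One then reads off $\inv_I(\cL)\cap\C^S=\V(\{f_C:C\in\CC(M)\})\cap\C^S=\V(\{f_C:C\subseteq S\})\cap\C^S=\inv_{S\cap I}(\pi_S(\cL))$ in a single line.

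Regarding your version: the geometric argument for $\inv_I(\cL)\cap\C^S\subseteq\inv_{S\cap I}(\pi_S(\cL))$ via $\pi_S\circ\inv_I=\inv_{S\cap I}\circ\pi_S$ and continuity of $\pi_S$ is correct (and, interestingly, does not need $S$ to be a flat), but it is unnecessary given the ideal comparison. More seriously, your argument for the reverse inclusion is backwards. You establish that $f_C$ for $C\subseteq S$ vanishes on $\inv_I(\cL)$; this only yields $\inv_I(\cL)\subseteq\V(\{f_C:C\subseteq S\})$, which is again the \emph{first} inclusion. To get $\inv_{S\cap I}(\pi_S(\cL))\subseteq\inv_I(\cL)$ you need every generator of $\I(\inv_I(\cL))$, restricted to $\C^S$, to vanish on $\inv_{S\cap I}(\pi_S(\cL))$---and that is exactly the statement that $f_C|_{\C^S}=0$ for $C\not\subseteq S$. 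You do mention this at the end as something to ``be careful'' about, but it is not a bookkeeping detail: it is the entire content of the reverse inclusion and the only place the flat hypothesis enters.
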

\begin{proof}
Recall that $F\subset [n]$ is a flat of $M$ if and only if $|\ol{F}\cap C|\neq 1$ for all 
circuits $C$ of $M$. Suppose that $S$ is a flat of $M$ and consider 
 the restriction of the circuit polynomials $f_C$ to $\C^S$. 
Note that $\ol{S} \subseteq I$, so that for any circuit $C$ with $|C\cap \ol{S}| \geq 2$, 
 $|C\cap I|\geq 2$ and the circuit polynomial $f_C$ is zero at every point of $\C^S$. 
 
 The circuits for which $|C\cap \ol{S}|= 0$ are exactly the circuits contained in $S$, 
 which are the circuits of the matroid restriction $M|_S$. Moreover the projection 
 $\pi_S(\cL)$ is cut out by the vanishing of the linear forms 
 $\{\ell_C: C\in \CC(M), C\subseteq S\}$, which are exactly the linear forms  
 $\{\ell_{C'} : C'\in \CC(M|_S)\}$. 
 It follows that the circuit polynomials ${\{f_{C'} : C'\in \CC(M|_S)\}}$
 are a subset of the circuit polynomials of $\cL$, namely  
 ${\{f_C: C\in \CC(M), C\subseteq S\}}$.  By Theorem~\ref{thm:main}, the variety of 
 circuit polynomials is the variety of the semi-inverted linear space, giving that 
 \begin{align*}
 \inv_I(\cL)\cap\C^S  & = \V(\{f_C: C\in \CC(M), C\subseteq S\})\cap{\C^S} \\
 & = \V(\{f_{C'} : C'\in \CC(M|_S)\}) =  \inv_{S\cap I}(\pi_S(\cL)). \end{align*}
\end{proof}

\begin{lem} \label{lem:cap}
If $S\subseteq [n]$ with $I \subseteq S $, then 
$\inv_I(\cL) \cap \C^S =\inv_{I}\left(\cL \cap \C^S\right)$.  
\end{lem}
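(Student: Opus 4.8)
The plan is to prove the two containments $\inv_I(\cL)\cap\C^S \subseteq \inv_I(\cL\cap\C^S)$ and $\inv_I(\cL\cap\C^S)\subseteq \inv_I(\cL)\cap\C^S$ separately, using Theorem~\ref{thm:main} to replace each semi-inverted linear space by the variety of its circuit polynomials. First I would record the elementary observation that since $I\subseteq S$, a point $\p\in\C^n$ with $\supp(\p)\subseteq S$ has $p_j=0$ for all $j\notin I$, and conversely $\cL\cap\C^S$ is the linear space obtained from $\cL$ by intersecting with the coordinate subspace $\C^S$. Note also that since $M(\cL)$ has no loops in $I$ and $I\subseteq S$, the matroid $M(\cL\cap\C^S)$ likewise has no loops in $I$, so $\inv_I(\cL\cap\C^S)$ is a genuine (nonempty) variety and Theorem~\ref{thm:main} applies to it.

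For the containment $\inv_I(\cL\cap\C^S)\subseteq\inv_I(\cL)\cap\C^S$: the image of $\cL\cap\C^S$ under $\inv_I$ clearly lies in $\C^S$, since the coordinates outside $S$ (which lie outside $I$, hence are left unchanged) vanish on $\cL\cap\C^S$; and it lies in $\inv_I(\cL)$ because $\cL\cap\C^S\subseteq\cL$ and the Zariski closure of the image is monotone. Taking closures gives the inclusion. For the reverse containment, I would argue at the level of ideals: by Theorem~\ref{thm:main}, $\inv_I(\cL)$ is cut out by $\{f_C : C\in\CC(M(\cL))\}$, so $\inv_I(\cL)\cap\C^S$ is cut out by these polynomials together with $\{x_j : j\notin S\}$. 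Restricting a circuit polynomial $f_C$ to $\C^S$ kills every monomial involving some $x_j$ with $j\notin S$; since $j\notin S$ implies $j\notin I$, inspecting the two types of monomials in \eqref{eq:fC_def} shows the restriction of $f_C$ to $\C^S$ is exactly $\x^{C\cap I}\cdot\ell_C(\inv_I(\x))$ with $\ell_C$ replaced by the restriction of $\ell_C$ to the coordinates in $S$ — equivalently it is (a sum of) circuit polynomials of the circuits of $M(\cL\cap\C^S)$, as in the proof of Lemma~\ref{lem:pi}. Since $\cL\cap\C^S$ is cut out inside $\C^S$ by $\{\ell_C|_{\C^S}:C\in\CC(M(\cL))\}$, and these span the same space of linear forms as $\{\ell_{C'}:C'\in\CC(M(\cL\cap\C^S))\}$, Theorem~\ref{thm:main} identifies $\V(\{f_C|_{\C^S}:C\in\CC(M(\cL))\})\cap\C^S$ with $\inv_I(\cL\cap\C^S)$, giving the remaining inclusion.

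The main obstacle I anticipate is the bookkeeping in the last step: one must be careful that restricting $\ell_C$ to $\C^S$ can produce the zero form (when $C\cap\ol{S}\neq\emptyset$, i.e. $|C\cap\ol{S}|\geq 1$, forcing $|C\cap\ol{S}|\geq 2$ since circuits do not meet a flat's complement in a single point — though here $S$ need not be a flat, so instead one simply observes $f_C|_{\C^S}=0$ whenever some monomial survives with a variable outside $S$ cannot happen, and otherwise $C\subseteq S$), so the surviving circuit polynomials are precisely those with $C\subseteq S$, and these are exactly the circuit polynomials of $M(\cL\cap\C^S)=M(\cL)|_S$. This is the same mechanism as in Lemma~\ref{lem:pi}, and the cleanest write-up would mirror that proof's final display, concluding
\[
\inv_I(\cL)\cap\C^S \ = \ \V(\{f_C : C\in\CC(M(\cL)),\, C\subseteq S\})\cap\C^S \ = \ \V(\{f_{C'} : C'\in\CC(M(\cL)|_S)\}) \ = \ \inv_I(\cL\cap\C^S).
\]
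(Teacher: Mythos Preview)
Your argument for the containment $\inv_I(\cL\cap\C^S)\subseteq\inv_I(\cL)\cap\C^S$ is correct and matches the paper's. The reverse containment, however, has a genuine gap rooted in a confusion between restriction and contraction. The matroid of $\cL\cap\C^S$ is the \emph{contraction} $M(\cL)/\ol{S}$, not the restriction $M(\cL)|_S$; restriction corresponds to the projection $\pi_S(\cL)$, which is the setting of Lemma~\ref{lem:pi}. Consequently your final display proves the wrong thing: the variety $\V(\{f_{C'}:C'\in\CC(M(\cL)|_S)\})$ inside $\C^S$ is $\inv_I(\pi_S(\cL))$, which \emph{contains} $\inv_I(\cL\cap\C^S)$ rather than being contained in it, so the chain of inclusions collapses. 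Your claim in the obstacle paragraph that $f_C|_{\C^S}$ vanishes unless $C\subseteq S$ is also false: since $C\cap I\subseteq I\subseteq S$, every monomial $\x^{C\cap I\backslash\{i\}}$ in the first sum of \eqref{eq:fC_def} survives restriction to $\C^S$, so $f_C|_{\C^S}$ is nonzero whenever $C\cap S\neq\emptyset$.

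The paper's argument runs in the opposite direction at the level of generators: start with a circuit $C'$ of $M(\cL\cap\C^S)=M(\cL)/\ol{S}$ and use the description of circuits of a contraction to find a circuit $C$ of $M(\cL)$ with $C'=C\cap S$ and hence $\ell_{C'}=\ell_C|_{\C^S}$ (up to scalar). Since $I\subseteq S$ forces $C'\cap I=C\cap I$ and $C\backslash C'\subseteq\ol{I}$, a direct comparison with \eqref{eq:fC_def} gives $f_{C'}(\x)=f_C(\pi_S(\x))$, so every generator of the ideal of $\inv_I(\cL\cap\C^S)$ already vanishes on $\inv_I(\cL)\cap\C^S$. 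Your middle paragraph comes close when it observes that $f_C|_{\C^S}$ is the polynomial associated to $\ell_C|_{\C^S}$, but the spanning statement you invoke---that the $\ell_C|_{\C^S}$ span the same space as the $\ell_{C'}$---is not enough: the assignment $\ell\mapsto\x^{\supp(\ell)\cap I}\ell(\inv_I(\x))$ is not linear in $\ell$, so a linear relation among the $\ell$'s does not translate into a polynomial relation among the $f$'s. One needs the sharper fact that each $\ell_{C'}$ \emph{equals} some $\ell_C|_{\C^S}$ on the nose.
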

\begin{proof} 
($\supseteq$) The affine variety $\inv_{I}\left(\cL \cap \C^S\right)$ is the Zariski-closure of 
$\cL \cap \C^S$ under the map $\inv_I$.  Since $\cL \cap \C^S$ is contained in $\cL$, 
$\inv_{I}\left(\cL \cap \C^S\right)$ is a subset of $\inv_I(\cL)$. Moreover $\inv_I(\p) \in \C^S$ 
for any point $\p\in \C^S$. The inclusion follows. 

($\subseteq$) For this we show the reverse inclusion of the ideals of polynomials vanishing on these varieties. 
Now let $C'$ be a circuit of $M(\cL\cap \C^S)$ and 
$\ell_{C'} = \sum_{i\in C'}a_ix_i$ its corresponding linear form. 
Then for some circuit $C$ of $M$, 
$C' = C\cap S$ and $\ell_{C'} $ equals the restriction $\ell_C(\pi_S(\x))$.  
Applying $\inv_I$ and clearing denominators then gives 
\[
f_{C'}(\x) 
\ \ = \ \ \x^{C'\cap I} \ell_{C'}(\inv_I(\x))
\ \ = \ \  \x^{C\cap I} \ell_{C}(\inv_I(\pi_S(\x))) \ \ = \ \ f_C(\pi_S(\x)).
\]
The middle equation holds because $I \subseteq S$, which implies that $C\backslash C' \subseteq \ol{S} \subseteq \ol{I}$.
\end{proof}

\begin{proof}[Proof of Theorem~\ref{thm:supp}]
Suppose that $T$ is a flat of $M$. Since $\ol{I}\subseteq T$, Lemma~\ref{lem:pi} says that
the restriction $\inv_I(\cL)\vert_{\C^T}$ equals $\inv_{T \cap I}\left(\pi_{T}(\cL)\right)$.
Furthermore since $T\cap I = S \cap I \subseteq S$,  we can apply Lemma~\ref{lem:cap} 
to find the intersection of $\inv_{T \cap I}\left(\pi_{T}(\cL)\right)$ with $\C^S$. 
All together this gives that $\inv_I(\cL)\cap \C^S$  equals 
\begin{equation}\label{eq:projcap}
(\inv_I(\cL)\cap\C^T)\cap\C^S  =  \inv_{S\cap I}\left(\pi_{T}(\cL)\right)\cap\C^S = \inv_{S\cap I}\left(\pi_{T}(\cL)\cap \C^S\right). 
\end{equation}

Suppose further that $T\backslash S$ is a flat of the matroid $M|_T$.
This implies that the contraction of $M|_T$ by $T\backslash S$ has no loops. This is the matroid 
of the linear space $\pi_T(\cL)\cap \C^S$, which is therefore not contained in any coordinate subspace $\{x_i=0\}$ for $i\in S$. 
It follows that there is a point $\p\in \pi_T(\cL)\cap \C^S$ of full support $\supp(\p) = S$. 
Equation \eqref{eq:projcap} then shows that $\inv_{S\cap I}(\p)$ is a point of support $S$ in $\inv_I(\cL)$.

Conversely, 
suppose that $S = \supp(\p)$ for some point $\p \in \inv_I(\cL)$.  Then $T$ is a flat of $M$. 
To see this, suppose for the sake of contradiction that for some circuit $C$ of $M$, $C\cap\ol{T}=\{j\}$. 
 Then $j$ is the unique element of $C\cap I$ for which $p_j=0$, and 
 evaluating  the circuit polynomial $f_C$ at the point $\p$ gives 
\[
f_C(\p) \ = \ \sum_{i\in C\cap I} a_i \p^{C\cap I\backslash\{ i\}}  +  \sum_{i\in C\backslash I } a_i \p^{C\cap I \cup\{ i\}}
\ = \ a_j \p^{C\cap I\backslash\{ j\}} \ \neq  \ 0,
\]
contradicting $\p\in \inv_I(\cL)$. Therefore $T$ is a flat of $M$ and \eqref{eq:projcap} holds. 
It follows that $\p$, or more precisely $\pi_T(\p)$, is a point of support $S$ in $\pi_T(\cL)$. 
Therefore $\pi_T(\cL)\cap\C^S$ contains a point of full support, the contraction of the matroid $M|_T$ by $T\backslash S$ 
has no loops, and $T\backslash S$ is a flat of the matroid $M|_T$. 
\end{proof}

\begin{ex} Suppose $\cL$ is a generic $d$-dimensional subspace of $\C^n$, and hence that $M =M(\cL)$ is the uniform matroid 
of rank $d$ on $[n]$. Its flats are the subsets $F\subseteq [n]$ of size $|F|<d$, along with the full set $[n]$. 
Consider $S\subseteq [n]$ and $T = S \cup \ol{I}$.  If $T$ is a flat of $M$, then either $|T|<d$, implying $|\ol{I}|<d$, 
or $T= [n]$, in which case $I\subseteq S$. 
If $|T|<d$, then $M|_T$ is the uniform matroid of rank $|T|$ on the elements $T$. 
Then every subset of $T$ is a flat of $M|_T$ and $S$ is the support of a point in $\inv_I(\cL)$. 
If $T = [n]$, then $T\backslash S = \ol{S}$ is a flat of $M|_T = M$ if and only if 
$|\ol{S}|<d$ or $|\ol{S}|=n$. Since $S$ contains $I$, $|\ol{S}|=n$ only when $I = S=\emptyset$. 
Therefore if $I \neq \emptyset$, we have $|S|>n-d$. Putting these together gives
\[
S\in \supp(\inv_I(\cL))  \  \ \Longleftrightarrow \ \
\begin{cases} 
S =\emptyset \text{ or } |S|>n-d & \text{ if } I = \emptyset\\
I\subseteq S \text{ and } |S|>n-d & \text{ if } 0<|I|\leq n-d\\
 |S\cup \ol{I}| <d \text{ or } I\subseteq S & \text{ if }  n-d<|I|.\\
\end{cases}
\]
\end{ex}

\section{Real points and hyperplane arrangements}\label{sec:real}
Here we explore a slight variation of $\inv_I$ that preserves a real-rootedness property of certain intersections. 
Given a  polynomial $f\in \R[x_1, \hdots, x_n]$ with a real-rootedness property called \emph{stability}, 
it is known that the polynomial $x_1^{\deg_{e_{1}}(f)}\cdot f(-1/x_1, x_2, \hdots, x_n)$ is again stable  \cite[Lemma 2.4]{wagnerSurvey}. 
Here we extend that to an action preserving real-rootedness of intersections with a family of affine-spaces. 
For $I\subseteq [n]$, define the rational map $\inv_I^-: \C^n \dashrightarrow  \C^n $ by 
\[(\inv_I^-(x))_i =
\begin{cases}
-1/x_i  &\text{ if } i\in I\\
 \ \ x_i  &\text{ if } i\not\in I.\\
\end{cases}
\]
Equivalently this is the composition of $\inv_I$ with the map that scales coordinates $x_i$ for $i\in I$ by $-1$. 
Note that the varieties $\inv_I(\cL)$ and $\inv_I^-(\cL)$ are isomorphic, and in particular they have the same degree. 
For any linear space $\cL \subset \C^n$, let $\cL^{\perp}$ denote the subspace of vectors $v$ 
for which $\sum_{i=1}^nv_ix_i=0$ for all $x\in \cL$.

\begin{prop}\label{prop:realPoints}
If $\cL \subset \C^n$ is invariant under complex conjugation, 
then for any $u\in \R^n$, all of the intersection points of $\inv_I^-(\cL)$ with $\cL^{\perp} + u$
are real. 
\end{prop}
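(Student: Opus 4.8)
The plan is to realize the intersection points of $\inv_I^-(\cL)$ with $\cL^\perp + u$ as critical points of a strictly concave function, so that each connected region of a suitable real arrangement contains exactly one such point, and all of them are thereby real. Concretely, I would write $\cL = \{x : B x = 0\}$ for a real matrix $B$ whose rows span $\cL^\perp$, so that $\cL^\perp + u = \{x : B(x-u) \in \text{(something)}\}$; more usefully, parametrize $\cL^\perp + u$ as $u + \operatorname{rowspan}(B)^\perp{}^{\perp}$, i.e. as $\{u + B^T \lambda : \lambda \in \R^{n-d}\}$ after identifying $\cL^\perp$ with the column span of $B^T$. A point $x \in \inv_I^-(\cL)$ with all coordinates in $I$ nonzero is the image under $\inv_I^-$ of a point $y \in \cL$ with $y_i = -1/x_i$ for $i \in I$ and $y_i = x_i$ for $i \notin I$. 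The key observation is that $y \in \cL$ translates, via the defining equations of $\cL$, into a system that is exactly the gradient system of the function
\[
g(x) \ = \ \sum_{i \in I} \log|x_i| \ - \ \tfrac12 \sum_{i \notin I} x_i^2 \ + \ (\text{linear terms in } x)
\]
restricted to the affine space $\cL^\perp + u$, or rather its analogue after substituting the parametrization. This mimics the Proudfoot–Speyer / Varchenko / De Loera–Sturmfels–Vinzant picture: the log-terms $\sum_{i\in I}\log|x_i|$ are the "reciprocal" directions and the $-\tfrac12 x_i^2$ terms are the new feature coming from the non-inverted coordinates.

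First I would set up the parametrization of $\cL^\perp + u$ and verify the gradient identification: a point $x$ in this affine space lies on $\inv_I^-(\cL)$ (generically, away from the coordinate hyperplanes $\{x_i=0\}_{i\in I}$) if and only if $x$ is a critical point of $g$ restricted to $\cL^\perp + u$. The sign choice $-1/x_i$ rather than $+1/x_i$ is precisely what makes this work, since $\frac{d}{dx_i}\log|x_i| = 1/x_i$ and we need $y_i = -1/x_i$ to appear; this is the reason the proposition is stated for $\inv_I^-$ and not $\inv_I$, and it parallels \cite[Lemma 2.4]{wagnerSurvey}. Second, I would restrict attention to a single connected component (region) $R$ of $(\cL^\perp + u) \setminus \bigcup_{i\in I}\{x_i = 0\}$: on such a region the function $g$ is well-defined and smooth, and its Hessian along $\cL^\perp$ is $-\operatorname{diag}(1/x_i^2)_{i\in I} \oplus -\operatorname{Id}_{\overline I}$ conjugated by the parametrization — hence negative definite — so $g|_R$ is strictly concave. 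Third, I would argue that on each bounded region (or more precisely each region whose recession cone behaves correctly — this is exactly condition (3) of Theorem~\ref{thm:real}) the function $g$ is proper and tends to $-\infty$ at the boundary and at infinity, hence attains a unique maximum, which is the unique critical point in $R$; on the other regions there is no critical point. Summing over regions, every complex intersection point of $\inv_I^-(\cL)$ with $\cL^\perp + u$ arises this way and is therefore real. For the count, this also matches Theorem~\ref{thm:real}(1)=(3), but for the present proposition only the reality is needed.

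The main obstacle I anticipate is the boundary/properness analysis: one must show that $g|_R \to -\infty$ as $x$ approaches a coordinate hyperplane $\{x_i = 0\}$ with $i \in I$ (easy, since $\log|x_i| \to -\infty$ and the remaining terms stay bounded on a bounded piece) and, more delicately, control the behavior as $\|x\| \to \infty$ within $R$. Along an unbounded direction $v$ in the recession cone of $R$, the quadratic terms $-\tfrac12\sum_{i\notin I}x_i^2$ give $-\infty$ unless $v_i = 0$ for all $i \notin I$, i.e. unless $v \in \R^I$; and the log terms grow only like $\log\|x\|$, which cannot compete with a linear term unless that too vanishes. So one needs that the recession cone of $R$ meets $\R^I$ only at the origin — which is exactly the hypothesis that isolates the "good" regions in Theorem~\ref{thm:real}(3) — to conclude $g|_R$ is proper; for regions failing this, a separate argument shows $g$ has no critical point there (e.g. it is monotone along the offending recession direction). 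Handling the genericity of $u$ (so that no intersection point lies on a coordinate hyperplane and all regions are "generic") and making sure the Zariski closure defining $\inv_I^-(\cL)$ does not introduce spurious intersection points on $\{x_i=0\}_{i\in I}$ — which can be dispatched using the support analysis of Theorem~\ref{thm:supp} — rounds out the argument.
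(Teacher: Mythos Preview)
Your critical-point/concavity setup is exactly what the paper does in Proposition~\ref{prop:minF} and Lemma~\ref{lem:infF}, but the paper runs the logic in the opposite direction: it first proves Proposition~\ref{prop:realPoints} directly, and only then uses the optimization picture to count regions. Your proposal inverts this, and in doing so introduces a genuine gap.

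The issue is the sentence ``summing over regions, every complex intersection point of $\inv_I^-(\cL)$ with $\cL^\perp + u$ arises this way and is therefore real.'' Your argument shows that each good region contributes exactly one \emph{real} intersection point, so the number of real intersection points is at least the number of good regions. It says nothing about non-real intersection points: the function $g$ is only defined on a real domain, so a complex point $x\notin\R^n$ is simply not a candidate critical point. To conclude there are no further complex intersections you would need the reverse inequality $\deg(\inv_I^-(\cL))\le$ (number of good regions), which is not available at this stage; in the paper that inequality is obtained \emph{from} Proposition~\ref{prop:realPoints}, not the other way around. You could close the loop by invoking Corollary~\ref{cor:degreeFacets} and separately matching facet count to region count, but that is a substantial detour.

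The paper's own proof is far shorter and avoids all of this. Write a putative intersection point as $a+\ii b$ with $a,b\in\R^n$; conjugation-invariance of $\cL^\perp$ forces $b\in\cL^\perp$, hence $b^T\inv_I^-(a+\ii b)=0$. Taking imaginary parts yields
\[
0 \ = \ \sum_{j\in I}\frac{b_j^2}{a_j^2+b_j^2} \ + \ \sum_{j\notin I} b_j^2,
\]
a sum of nonnegative terms, so $b=0$. This is where the sign in $\inv_I^-$ (as opposed to $\inv_I$) is actually used, and it bypasses any need to count regions or analyze recession cones.
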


\begin{proof}
If $\cL$ is contained in a coordinate hyperplane $\{x_i=0\}$ where $i\in I$, then 
$\inv_I^-(\cL)$ is empty and the claim trivially follows. 
Otherwise, the points $x\in \inv_I^-(\cL)$ with $x_i\neq 0$ for $i\in I$ are necessarily Zariski-dense, 
and for a generic point $u\in \R^n$, the intersection points of $\inv_I^-(\cL)$ with $\cL^{\perp}+u$ belongs to $(\C^*)^I\times \C^{[n]\backslash I}$. 
Showing that these intersection points are real for generic $u$ implies it for all. 

Suppose that a point $a+\ii b$ belongs to the intersection of $\inv_I^-(\cL)$ with $\cL^{\perp}+u$
where $a,b\in \R^n$ and $a_j +\ii b_j \neq 0$ for every $j\in I$. 
Then $(a-u)+ \ii b$ belongs to $\cL^{\perp}$. Since $\cL^{\perp}$ is conjugation invariant, 
it follows that $b\in \cL^{\perp}$. In particular, $b^Tx=0$ for all $x\in \cL$.  
Since $a+\ii b$ belongs to $\inv_I^-(\cL)$, $\inv_I^-(a+\ii b)$ belongs to $\cL$. It follows that
$b^T  \inv_I^-(a+\ii b) =0$. Taking imaginary parts gives 
\[
0 \ \ = \ \ {\rm Im}\left( \sum_{j\in I} \frac{-b_j}{a_j + \ii b_j} \ + \  \sum_{j\not\in I}b_j(a_j + \ii b_j)\right) 
 \ \ = \ \  \sum_{j\in I} \frac{b_j^2}{a_j^2 +  b_j^2} \ +  \ \sum_{j\not\in I}b_j^2.
\]
Since every term is nonnegative and their sum is zero, each term must be zero. Thus $b_j=0$ for all $j$ 
and the point $a+\ii b$ is real.  
\end{proof}

\begin{rem}
Propostion~\ref{prop:realPoints} shows that $\inv_I^-(\cL)$ is \emph{hyperbolic} with respect to $\cL^{\perp}$, 
in the sense of \cite{SV}.  In fact, one can replace $\cL^{\perp}$ in this statement by any 
linear space of the same dimension whose non-zero Pl\"ucker coordinates agree in sign 
with those of $\cL^{\perp}$. This shows that $\inv_I^-(\cL)$ is a \emph{stable variety}.  See \cite[Section 2]{KV} for more. 
\end{rem}

\begin{prop} \label{prop:minF}
For generic $u\in \R^n$, the intersection points of  $\inv_I^-(\cL)$ with $\cL^{\perp} + u$ are the minima of the function 
\begin{equation}\label{eq:F}
f(x)  \ \  = \ \   \frac{1}{2}\sum_{j\not\in I} x_j^2 - \sum_{j\in I} \log|x_j|
\end{equation}
over the regions in the complement of the \textup{(}affine\textup{)} hyperplane arrangement $\{x_i=0\}_{i\in I}$ in the affine linear space $\cL^{\perp}+u$. 
\end{prop}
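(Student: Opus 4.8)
The plan is to recognize the intersection points of $\inv_I^-(\cL)$ with $\cL^\perp + u$ as the critical points of the function $f$ restricted to the affine space $\cL^\perp + u$, and then to argue that these critical points are exactly the minima on each region of the hyperplane arrangement complement. First I would set up coordinates: pick a point $u \in \R^n$ and parametrize $\cL^\perp + u$ by $v \mapsto u + v$ for $v \in \cL^\perp$. The function $f(x) = \tfrac12\sum_{j \notin I} x_j^2 - \sum_{j \in I}\log|x_j|$ is smooth and strictly convex on each connected component (region) of $(\cL^\perp + u) \setminus \{x_i = 0\}_{i \in I}$, since $x \mapsto x^2/2$ is convex and $x \mapsto -\log|x|$ is strictly convex on each of $\{x>0\}$ and $\{x<0\}$. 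Moreover, on each such region $f \to +\infty$ as $x$ approaches the boundary walls $\{x_i = 0\}$ and (because of the $\sum x_j^2$ term, once one checks the quadratic form is positive definite on the relevant directions, or argues via genericity of $u$) as $\|x\| \to \infty$ within the region. Hence $f$ attains a unique minimum on each region, and that minimum is its unique critical point there.

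Next I would compute the critical-point equations. The gradient of $f$ is $\nabla f(x) = \sum_{j \notin I} x_j e_j - \sum_{j \in I} x_j^{-1} e_j$, which is precisely $\inv_I^-(x)$ (recalling $(\inv_I^-(x))_i = -1/x_i$ for $i \in I$ and $x_i$ otherwise). A point $x \in \cL^\perp + u$ is a critical point of $f|_{\cL^\perp + u}$ iff $\nabla f(x)$ is orthogonal to $\cL^\perp$, i.e. iff $\nabla f(x) \in (\cL^\perp)^\perp = \cL$. But $\nabla f(x) = \inv_I^-(x) \in \cL$ is exactly the condition that $x \in \inv_I^-(\cL)$ (on the dense set where the map is defined, i.e. where $x_i \neq 0$ for $i \in I$, which is automatic on the arrangement complement). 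Thus the critical points of $f$ on $(\cL^\perp + u)\setminus\{x_i=0\}_{i\in I}$ coincide set-theoretically with $\inv_I^-(\cL) \cap (\cL^\perp + u)$ lying in $(\C^*)^I \times \C^{[n]\setminus I}$; by Proposition~\ref{prop:realPoints} all these intersection points are real, and for generic $u$ they all avoid the walls $\{x_i = 0\}$ (as argued in the proof of Proposition~\ref{prop:realPoints}), so every intersection point arises this way.

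Combining the two parts: for generic $u$, each region of the complement contributes exactly one point of $\inv_I^-(\cL) \cap (\cL^\perp + u)$, namely the minimizer of $f$ on that region, and conversely every such intersection point is the minimizer of $f$ on the region containing it. The main obstacle I anticipate is establishing properness of $f$ on each region, i.e.\ that $f$ does in fact blow up at infinity along the affine slice $\cL^\perp + u$; the $-\log|x_j|$ terms grow only logarithmically, so one must genuinely use the quadratic term, and it is conceivable that $\cL^\perp$ contains a line on which all coordinates outside $I$ vanish, so that the quadratic part is degenerate in that direction. Here genericity of $u$ is the right tool — one should show that for $u$ outside a measure-zero set the affine space $\cL^\perp + u$ meets no such ``bad'' direction at infinity within the closure of a region, or equivalently reduce to the Zariski-dense situation of Proposition~\ref{prop:realPoints} where the intersection is finite and transverse, so that a compactness/degree argument forces the minimum on each bounded-below region to exist and to be the unique critical point. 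Once properness is in hand, strict convexity does the rest.
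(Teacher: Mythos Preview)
Your core argument matches the paper's: compute $\nabla f(x) = \inv_I^-(x)$, observe that the critical-point condition on $\cL^\perp + u$ is $\nabla f(x) \in (\cL^\perp)^\perp = \cL$, which is exactly membership in $\inv_I^-(\cL)$, and use strict convexity (the Hessian of $f$ is diagonal with positive entries) to conclude that any critical point on a region $\cP_\sigma$ is its global minimum there. That equivalence is all the proposition asserts: the set of intersection points equals the set of minima, taken over those regions where a minimum exists.

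Where you go astray is in trying to show that \emph{every} region has a minimum. The proposition does not claim this, and in general it is false. Your proposed fix, genericity of $u$, cannot work: the recession cone of $\cP_\sigma = (\R^I_\sigma \times \R^{[n]\backslash I}) \cap (\cL^\perp + u)$ is $(\R^I_\sigma \times \R^{[n]\backslash I}) \cap \cL^\perp$, which does not depend on $u$ at all. Whenever $\cL^\perp \cap \R^I \neq \{0\}$ (equivalently, whenever $I$ is dependent in $M(\cL)$), some regions $\cP_\sigma$ have a recession direction $v \in \R^I$ along which the quadratic part of $f$ is constant and the $-\log$ terms drive $f$ to $-\infty$; no choice of $u$ eliminates such directions. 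The paper isolates exactly this issue in the subsequent Lemma~\ref{lem:infF}, which characterizes the regions that attain their infima as those whose recession cone meets $\R^I$ trivially, and that lemma is what feeds the degree count in Theorem~\ref{thm:real}. For the present proposition you should simply drop the properness claim and the ``one minimizer per region'' conclusion; what remains is correct and is essentially the paper's proof.
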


\begin{proof}
On $(\R^*)^I\times \R^{[n]\backslash I}$, $f$ is infinitely differentiable and 
we examine its behavior on each orthant. 
For a sign pattern $\sigma: I \rightarrow \{\pm 1\}$, let $\R^I_\sigma$ denote the orthant 
of points in $(\R^*)^I$ with $\sigma(i)x_i>0$ for all $i\in I$. 
 Inspecting the Hessian of $f$ shows that it is  also strictly convex on $\R_\sigma^I\times \R^{[n]\backslash I}$ . 
 Indeed, the Hessian of $f$ is a diagonal matrix 
whose $(j,j)$th entry is equal to $1/x_j^2$ for $j\in I$ and $1$ for $j\not\in I$ and is therefore positive 
definite on $(\R^*)^I \times \R^{[n]\backslash I}$. 

Define the (open) polyhedron $\cP_\sigma$ to be the intersection of $\R^I_\sigma\times \R^{[n]\backslash I}$ with 
the affine space $\cL^{\perp}+u$. 
The function $f$ is strictly convex on $\cP_\sigma$. Therefore any critical point of $f$ over $\cP_\sigma$ is a global minimum. 
The affine span of $ \cP_\sigma$ is $\cL^{\perp}+u$, so $p \in \cP_\sigma$ is a critical point of $f$ 
when $\nabla f (p)$ belongs to $(\cL^{\perp})^{\perp} = \cL$.  Since $\nabla f(p) = \inv_I^-(p)$ and $\inv_I^-$ is an involution, 
this implies that $p$ belongs to $\inv_I^-(\cL)$. 
Putting this all together, we find that for a point $p\in \cP_{\sigma}$,  
\[
p \text{ attains the minimum of $f$ over $\cP_{\sigma}$ } 
\ \Leftrightarrow \
\nabla f(p) \in \cL 
\ \Leftrightarrow \
p \in \inv_I^-(\cL). 
\]
\end{proof}

We can characterize which connected components of $(\cL^{\perp}+u)\backslash\{x_i=0\}_{i\in I}$ 
contains a point in $\inv_I^-(\cL)$ in terms of the recession cone ${\rm rec}(\cP_{\sigma}) = (\R^I_\sigma\times \R^{[n]\backslash I}) \cap \cL^{\perp}$.

\begin{lem} \label{lem:infF}
The infimum of $f$ over $\cP_\sigma$ is attained if and only if the intersection of $\R^I$ with the recession cone of $\cP_\sigma$ is trivial, i.e. 
${\rm rec}(\cP_\sigma) \cap \R^I = \{0\}$. 
\end{lem}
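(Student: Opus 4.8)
The plan is to analyze the behavior of $f$ along rays in $\cP_\sigma$ and show that the two directions of potential unboundedness correspond exactly to the two summands in $f$. First I would recall that $\cP_\sigma$ is a (possibly unbounded) open polyhedron whose affine span is $\cL^\perp + u$ and whose recession cone is $\rec(\cP_\sigma) = (\R^I_\sigma \times \R^{[n]\backslash I})\cap \cL^\perp$, and that by Proposition~\ref{prop:minF} $f$ is strictly convex on $\cP_\sigma$, so the infimum is attained if and only if $f$ is bounded below on $\cP_\sigma$ (for a strictly convex function on a polyhedron, boundedness below is equivalent to coercivity along every recession ray, and attainment then follows by the standard argument: a minimizing sequence stays in a compact slice).

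The heart of the argument is to study $f$ restricted to a ray $p + t v$ for $p\in \cP_\sigma$ and $0\neq v\in \rec(\cP_\sigma)$, as $t\to +\infty$. Writing $f(p+tv) = \tfrac12\sum_{j\notin I}(p_j + tv_j)^2 - \sum_{j\in I}\log|p_j + tv_j|$, the first sum is a convex quadratic in $t$ with leading coefficient $\tfrac12\sum_{j\notin I}v_j^2$, and the second sum grows like $-\bigl(\#\{j\in I : v_j\neq 0\}\bigr)\log t$. The key dichotomy: if $v\notin \R^I$, i.e. $v_j\neq 0$ for some $j\notin I$, then the quadratic term dominates and $f(p+tv)\to +\infty$; if instead $v\in \R^I$ (so $v_j = 0$ for all $j\notin I$, and since $0\neq v\in\rec(\cP_\sigma)$ we have $v_j\neq 0$ for at least one $j\in I$), then $f(p+tv) = \tfrac12\sum_{j\notin I}p_j^2 - \sum_{j\in I}\log|p_j+tv_j|\to -\infty$. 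Hence $\rec(\cP_\sigma)\cap\R^I \neq \{0\}$ forces $f$ to be unbounded below, so the infimum is not attained.

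For the converse, assume $\rec(\cP_\sigma)\cap\R^I = \{0\}$; I must show $f$ attains its minimum on $\cP_\sigma$. Here I would argue coercivity: along every recession direction $v\neq 0$ the computation above gives $f(p+tv)\to+\infty$ (since now necessarily some $v_j\neq 0$ with $j\notin I$), and moreover $f\to+\infty$ as $x$ approaches the boundary hyperplanes $\{x_j = 0\}$, $j\in I$, because of the $-\log|x_j|$ terms. These two facts together with strict convexity imply that every sublevel set $\{x\in\cP_\sigma : f(x)\leq c\}$ is bounded and bounded away from the excluded hyperplanes, hence has compact closure inside $\cP_\sigma$; thus a minimizing sequence has a subsequential limit in $\cP_\sigma$, which is the global minimizer.

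The main obstacle I anticipate is the converse coercivity claim: one must control $f$ simultaneously in the "radial" directions (recession rays) and in the directions approaching the deleted coordinate hyperplanes, and convert this into compactness of sublevel sets within the open polyhedron $\cP_\sigma$. The cleanest route is to bound $f$ below by $\tfrac12\operatorname{dist}(x,\partial)^2$-type estimates plus the logarithmic barrier, or alternatively to pass to the closure $\overline{\cP_\sigma}$, note the assumption $\rec(\overline{\cP_\sigma})\cap\R^I = \{0\}$ means every recession ray has a nonzero coordinate outside $I$, and then invoke a standard lemma that a lower-semicontinuous strictly convex function that blows up both at infinity along all recession rays and along the removed faces attains its infimum.
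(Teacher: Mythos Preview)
Your forward direction is identical to the paper's: pick $0\neq v\in{\rm rec}(\cP_\sigma)\cap\R^I$ and watch $f(p+tv)\to-\infty$.

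For the converse you take a genuinely different route. The paper writes $\cP_\sigma=Q+{\rm rec}(\cP_\sigma)$ with $Q$ a compact polytope, observes that $\sum_{j\notin I}v_j^2>0$ on the section $S=\{v\in{\rm rec}(\cP_\sigma):\|v\|_1=1\}$, and then for each $(p,v)\in Q\times S$ locates the unique critical point $t^*(p,v)$ of $t\mapsto f(p+tv)$; continuity of $t^*$ on the compact set $Q\times S$ yields a uniform bound $T$, so the infimum over $\cP_\sigma$ equals the infimum over the compact set $Q+[0,T]S$. Your argument instead goes through compactness of sublevel sets: $f\to+\infty$ along every recession ray and along approaches to the removed hyperplanes, hence $\{f\le c\}$ is compact in $\cP_\sigma$. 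Your route is the more standard convex-analysis argument and has the virtue of explicitly handling the open boundary (which the paper's writeup glosses over); the paper's route is more constructive and sidesteps your acknowledged gap, since the uniformity you need is exactly what the continuity of $t^*$ on $Q\times S$ provides.

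Two small points. First, your parenthetical ``boundedness below is equivalent to coercivity along every recession ray'' is false for strictly convex functions in general (take $e^{-t}$ on $\R$); fortunately you only use the direction \emph{coercivity $\Rightarrow$ attainment}, which is fine. Second, your anticipated obstacle is real but easily dispatched: if $x^{(k)}\in\cP_\sigma$ with $\|x^{(k)}\|\to\infty$, pass to a subsequence with $x^{(k)}/\|x^{(k)}\|\to v\in{\rm rec}(\cP_\sigma)$; the hypothesis forces $v_{j_0}\neq 0$ for some $j_0\notin I$, and since $|x^{(k)}_j|\le\|x^{(k)}\|$ gives $-\log|x^{(k)}_j|\ge-\log\|x^{(k)}\|$ for each $j\in I$, one gets $f(x^{(k)})\ge\tfrac12(x^{(k)}_{j_0})^2-|I|\log\|x^{(k)}\|\to+\infty$. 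Boundedness of the sublevel set then gives the uniform lower bound on $|x_j|$, $j\in I$, as you said.
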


\begin{proof}
($\Rightarrow$)  Suppose ${\rm rec}(\cP_\sigma)\cap \R^I$ contains $v\neq 0$.  Then for any $p\in \cP_\sigma$,
the univariate function $f(p+tv) = \frac{1}{2}\sum_{j\not\in I}p_j^2 - \sum_{i\in I}\log|p_i + t v_i|$ is strictly decreasing 
as $t\rightarrow \infty$ and the infimum of $f$ is not attained on $\cP_{\sigma}$. 

($\Leftarrow$) 
Suppose that ${\rm rec}(\cP_\sigma) \cap \R^I = \{0\}$. Then the quadratic form $\sum_{j \not\in I}x_j^2$ is 
positive definite on the recession cone ${\rm rec}(\cP_\sigma)$. 
We can write $\cP_\sigma$ as $Q + {\rm rec}(\cP_{\sigma})$, where $Q$ is a compact polytope. 
Let $S$ denote the section of the recession cone, ${S =  \{v \in {\rm rec}(\cP_{\sigma}): || v||_1 =1\}.}$
For any point $p\in Q$ and $v\in S$, consider the univariate function $t\mapsto f(p+tv)$, which is strictly convex and continuous on 
$\{t : p + tv\in \cP_\sigma\}$. Its derivative 
\[
\frac{d}{dt}f(p+tv)  \ \ = \ \  
\sum_{j \not\in I}v_jp_j \ + \ t\sum_{j \not\in I}v_j^2  \ - \ \sum_{i\in I}\frac{v_i}{p_i+tv_i}
\] 
has a unique root $t\in \R$ for $p + tv\in \cP_\sigma$. 
 Indeed, by assumption $\sum_{j \not\in I}v_j^2>0$. Then, 
 since $\frac{d^2}{dt^2}f(p+tv) >0$ where defined, 
$\frac{d}{dt}f(p+tv)$ is strictly increasing on $\{t : p + tv\in \cP_\sigma\}$. 
If $v\in \R^{[n]\backslash I}$, then 
this set is all of $\R$ and $\frac{d}{dt}f(p+tv)$ is linear. Otherwise, there is a minimum 
$t$ for which  $p + tv\in \cP_\sigma$ and $\frac{d}{dt}f(p+tv)\rightarrow -\infty$ as $t$
approaches this minimum, whereas $ \frac{d}{dt}f(p+tv)>0$ for sufficiently large $t$. 
Let $t^*(p,v)$ denote this unique root of $\frac{d}{dt}f(p+tv)$.  This is a continuous function 
in $p$ and $v$.  Let $T$ denote the maximum of $t^*(p,v)$ over $(p,v) \in Q\times S$. 

Now we claim that when minimizing $f$ over $\cP_\sigma$, it suffices to 
minimize over the compact set $Q + [0,T]S$. Indeed, if $y\in  \cP_\sigma$, 
then $y = p + tv$ for some $p\in Q$, $v\in S$ and $t\in \R_{>0}$. 
If $t>T$, then the point $x =  p + Tv \in Q + [0,T]S$ satisfies $f(x)< f(y)$. 
In particular, the minimum of $f$ is bounded from below and is therefore attained on the compact 
set $Q + [0,T]S$. 
\end{proof}

\begin{prop}
For generic  $u\in \R^n$, there is exactly one point of $\inv_I^-(\cL)$ in each region of $(\cL^{\perp}+u)\backslash\{x_i=0\}_{i\in I}$ 
whose recession cone has trivial intersection with $\R^I$. 
The degree of $\inv_I^-(\cL)$ equals the number of these regions. 
\end{prop}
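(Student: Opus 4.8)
The plan is to combine the three preceding results --- Proposition~\ref{prop:minF}, Lemma~\ref{lem:infF}, and the degree computation from Corollary~\ref{cor:degreeFacets} (equivalently Theorem~\ref{thm:main}) --- into a single counting statement. First I would fix a generic $u\in\R^n$ and stratify the complement $(\cL^{\perp}+u)\backslash\{x_i=0\}_{i\in I}$ into its connected components. Each such component is one of the open polyhedra $\cP_\sigma$ appearing in the proof of Proposition~\ref{prop:minF}, indexed by a sign pattern $\sigma\colon I\to\{\pm1\}$ (for generic $u$ no component meets a coordinate hyperplane, and distinct sign patterns that are actually realized give distinct components). On each $\cP_\sigma$ the function $f$ of \eqref{eq:F} is strictly convex, so it has \emph{at most} one critical point there, and by Proposition~\ref{prop:minF} the critical points of $f$ on the $\cP_\sigma$ are exactly the intersection points of $\inv_I^-(\cL)$ with $\cL^{\perp}+u$. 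Lemma~\ref{lem:infF} then tells us precisely which $\cP_\sigma$ contribute: the infimum of $f$ on $\cP_\sigma$ is attained --- hence (by strict convexity) attained at a unique interior minimum, which is a critical point --- if and only if ${\rm rec}(\cP_\sigma)\cap\R^I=\{0\}$; conversely if ${\rm rec}(\cP_\sigma)\cap\R^I\neq\{0\}$ then $f$ has no critical point on $\cP_\sigma$ at all. This establishes the first sentence: exactly one point of $\inv_I^-(\cL)$ in each region with trivial recession-cone intersection with $\R^I$, and none in the others.

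For the degree statement, I would argue that for generic $u$ the affine space $\cL^{\perp}+u$ is a generic translate of a space of dimension $n-d$, hence meets the $d$-dimensional variety $\inv_I^-(\cL)$ transversally in exactly $\deg\big(\inv_I^-(\cL)\big)$ points, counted with multiplicity over $\C$. By Proposition~\ref{prop:realPoints} all these points are real, and by the previous paragraph each lies in a distinct region of $(\cL^{\perp}+u)\backslash\{x_i=0\}_{i\in I}$ --- namely the ones with ${\rm rec}(\cP_\sigma)\cap\R^I=\{0\}$ --- with exactly one point per such region. Since $\inv_I^-(\cL)$ and $\inv_I(\cL)$ are isomorphic and in particular have the same degree, the number of such regions equals $\deg\big(\inv_I(\cL)\big)$, which by Corollary~\ref{cor:degreeFacets} is the number of facets of $\Delta_w(M(\cL),I)$; this is exactly the content being asserted (and it also ties into Theorem~\ref{thm:real}(1) vs.\ (3)).

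The two points requiring care are: (i) justifying that the generic intersection is transverse and consists of $\deg$ honest simple points, so that ``counted with multiplicity'' can be dropped --- this follows from a standard Bertini-type genericity argument together with the fact that the real points produced are smooth points of $\inv_I^-(\cL)$ lying in the open dense torus orbit $(\C^*)^I\times\C^{[n]\backslash I}$, where $\inv_I^-$ is a local isomorphism; and (ii) ensuring that for generic $u$ the decomposition into $\cP_\sigma$'s genuinely matches connected components of the complement --- i.e. no lower-dimensional collapsing occurs and the only regions are the full-dimensional $\cP_\sigma$. Both are genericity statements, cut out by the vanishing of finitely many polynomials (resultants/discriminants of the projection away from $\cL^{\perp}$, as in \cite{SSV13}), so they hold outside a measure-zero set.

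\begin{proof}
Fix $u\in\R^n$ generic. As in the proof of Proposition~\ref{prop:minF}, for each sign pattern $\sigma\colon I\to\{\pm1\}$ let $\cP_\sigma=(\R^I_\sigma\times\R^{[n]\backslash I})\cap(\cL^{\perp}+u)$. For generic $u$ the affine space $\cL^{\perp}+u$ avoids every subspace $\{x_i=0, x_j=0\}$ with $i,j\in I$ of codimension $2$ in it, so the connected components of $(\cL^{\perp}+u)\backslash\{x_i=0\}_{i\in I}$ are exactly the nonempty open polyhedra $\cP_\sigma$. By Proposition~\ref{prop:minF}, the set of intersection points of $\inv_I^-(\cL)$ with $\cL^{\perp}+u$ is the set of points at which $f$ attains a minimum over some $\cP_\sigma$. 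Since $f$ is strictly convex on each $\cP_\sigma$ (its Hessian is positive definite there, as computed in the proof of Proposition~\ref{prop:minF}), it attains a minimum on $\cP_\sigma$ at most once, and any critical point there is the unique global minimum. By Lemma~\ref{lem:infF}, this minimum is attained if and only if ${\rm rec}(\cP_\sigma)\cap\R^I=\{0\}$. Therefore each region $\cP_\sigma$ with ${\rm rec}(\cP_\sigma)\cap\R^I=\{0\}$ contains exactly one point of $\inv_I^-(\cL)$, and every other region contains none. This proves the first assertion.

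For the second, note that $\inv_I^-(\cL)$ is a $d$-dimensional affine variety whose points with all coordinates in $I$ nonzero form a Zariski-dense smooth subset on which $\inv_I^-$ restricts to an isomorphism onto a dense subset of $\cL$. Hence for generic $u$ the translate $\cL^{\perp}+u$, which is a generic affine space of complementary dimension $n-d$, meets $\inv_I^-(\cL)$ transversally in $\deg\big(\inv_I^-(\cL)\big)$ smooth points over $\C$, each with multiplicity one. By Proposition~\ref{prop:realPoints} all of these points are real. By the previous paragraph they are in bijection with the regions $\cP_\sigma$ satisfying ${\rm rec}(\cP_\sigma)\cap\R^I=\{0\}$. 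Since $\inv_I^-(\cL)$ and $\inv_I(\cL)$ are isomorphic, $\deg\big(\inv_I^-(\cL)\big)=\deg\big(\inv_I(\cL)\big)$, and by Corollary~\ref{cor:degreeFacets} this common degree is the number of facets of $\Delta_w(M(\cL),I)$. Thus the number of regions of $(\cL^{\perp}+u)\backslash\{x_i=0\}_{i\in I}$ whose recession cone meets $\R^I$ trivially equals $\deg\big(\inv_I^-(\cL)\big)$, as claimed.
\end{proof}
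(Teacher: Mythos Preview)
Your argument for the first assertion --- that each region with trivial recession-cone intersection contains exactly one point of $\inv_I^-(\cL)$ and the others contain none --- is correct and matches the paper's use of Propositions~\ref{prop:realPoints}, \ref{prop:minF}, and Lemma~\ref{lem:infF}.

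The gap is in the degree statement. You assert that $\cL^{\perp}+u$ is ``a generic affine space of complementary dimension'' and hence meets $\inv_I^-(\cL)$ in exactly $\deg$ points. But $\cL^{\perp}+u$ is only a generic \emph{translate} of the \emph{fixed} linear space $\cL^{\perp}$; the direction is not generic, and a Bertini-type argument does not apply. Concretely, the projective closure $\overline{\inv_I^-(\cL)}\subset\PP^n$ could a priori meet the projectivization of $\cL^{\perp}$ on the hyperplane at infinity $\{x_0=0\}$; any such point lies on the closure of \emph{every} translate $\cL^{\perp}+u$, so the affine intersection would have strictly fewer than $\deg$ points no matter how generic $u$ is. Your transversality and smoothness remarks address multiplicity at the finite intersection points but do nothing to rule out points at infinity.

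The paper fills this gap with an explicit computation: for any nonzero $a\in\cL^{\perp}$ with support $S$, the linear form $a^T\x$ vanishes on $\cL$, so $g=\x^{S\cap I}\cdot a^T\inv_I^-(\x)$ vanishes on $\inv_I^-(\cL)$, and one checks directly (splitting into the cases $S\subseteq I$ and $S\not\subseteq I$) that the homogenization satisfies $g^{\rm hom}(0,a)\neq 0$ or $g^{\rm hom}(0,\ol{a})\neq 0$. Hence $[0:a]\notin\overline{\inv_I^-(\cL)}$ for any $a\in\cL^{\perp}$, and for generic $u$ the affine intersection has exactly $\deg\big(\inv_I^-(\cL)\big)$ points. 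Without this step (or an equivalent one), your argument only yields the inequality that the number of good regions is at most $\deg$, not the claimed equality.
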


\begin{proof}
First we show that for generic $u\in \R^n$, the number of intersection points of $\inv_I^-(\cL)$ with $\cL^{\perp}+u$ equals the degree of $\inv_I^-(\cL)$. To do this, 
we show that the closure $\ol{\inv_I^-(\cL)}$ in $\PP^n(\C)$ has no points in common with $\cL^{\perp}+x_0u$ with $x_0 = 0$. 
For the sake of contradiction suppose that for some $a\in \cL^{\perp}$, the point $[0: a]$ belongs to $\ol{\inv_I^-(\cL)}$ and let $S = {\rm supp}(a)$.  

It follows that $a^T\x = \sum_{i\in S}a_i x_i$ vanishes on $\cL$, $g = \x^{S\cap I}\cdot  a^T\inv_I^-(\x)$ vanishes on $\inv_I^-(\cL)$, and the  
homogenezation $g^{\rm hom}$ with respect to $x_0$ vanishes on the closure $\ol{\inv_I^-(\cL)}\subseteq \PP^n(\C)$. 
In particular, $g^{\rm hom} (0,a) =0$.  If $S\subseteq I$,  this contradicts the evaluation of the polynomial $g^{\rm hom} = g = \sum_{j\in S}a_j \x^{S\backslash j}$ given by 
\[ g^{\rm hom}(0,a) = \sum_{j\in S}a^S = a^S\cdot |S| \neq 0.\]
Similarly, since $\ol{\inv_I^-(\cL)}$ is invariant under complex conjugation, we also have $g^{\rm hom} (0,\ol{a}) =0$, where $\ol{a}$ is the complex conjugate of $a$. 
If $S\not\subseteq I$, this contradicts the evaluation of the polynomial 
$g^{\rm hom} = -x_0^2\sum_{j\in S\cap I}a_j \x^{S\cap I \backslash j} + \x^{S\cap I}\sum_{j\in S\backslash I}a_j x_j$  given by  
\[ g^{\rm hom}(0,\ol{a})  =  \ol{a}^{S\cap I} \sum_{j\in S\backslash I}a_j \ol{a_j} \neq 0.\]
Therefore all the intersection points of $\ol{\inv_I^-(\cL)}$ with $\cL^{\perp}+x_0u$ have $x_0\neq 0$. 
Then for generic $u$, the number of intersection points of $\inv_I^-(\cL)$ and $\cL^{\perp}+u$ equals the degree of $\inv_I^-(\cL)$. 

By Propositions~\ref{prop:realPoints} and \ref{prop:minF}, each of these intersection points is real 
and thus is a minimizer of the function $f(x)$ of \eqref{eq:F} over some connected component $\cP_{\sigma}$ of $(\cL^{\perp}+u)\backslash\{x_i=0\}_{i\in I}$.  
By Lemma~\ref{lem:infF}, the components $\cP_{\sigma}$
contains a minimizer if and only if ${\rm rec}(\cP_{\sigma})\cap \R^{I}= \{0\}$.
\end{proof}

This together with Corollary~\ref{cor:degreeFacets} constitutes the proof of Theorem~\ref{thm:real}. 
For special cases of $I$, we find a simpler characterization of the regions counted by $\deg(\inv_I(\cL))$. 

\begin{cor} Let $u\in \R^n$ be generic. 
If $I$ is independent in the matroid $M(\cL)$, then the degree of $\inv_I(\cL)$ 
equals the total number of regions in ${(\cL^{\perp}+u)\backslash\{x_i=0\}_{i\in I}}$. 
If $I = [n]$, then the degree of $\inv_I(\cL)$ 
equals the number of bounded regions in ${(\cL^{\perp}+u)\backslash \{x_i=0\}_{i\in I}}$. 
\end{cor}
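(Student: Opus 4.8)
Both parts follow quickly from the proposition immediately preceding, once we record that $\deg(\inv_I(\cL)) = \deg(\inv_I^-(\cL))$. That proposition identifies $\deg(\inv_I^-(\cL))$ with the number of regions $\cP_\sigma$ of $(\cL^{\perp}+u)\backslash\{x_i=0\}_{i\in I}$ satisfying ${\rm rec}(\cP_\sigma)\cap \R^I = \{0\}$, where $\R^I = \{x\in\R^n : x_j = 0 \text{ for } j\notin I\}$ and ${\rm rec}(\cP_\sigma) = (\R^I_\sigma\times\R^{[n]\backslash I})\cap\cL^{\perp}$. Since each $\cP_\sigma$ is an intersection of convex sets it is connected, and a point in the complement of the arrangement determines $\sigma$ by the signs of its coordinates indexed by $I$; hence the nonempty $\cP_\sigma$ are exactly the regions of the arrangement. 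So in each of the two cases it only remains to determine which $\sigma$ meet the recession-cone condition.

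For $I = [n]$ we have $\R^I = \R^n \supseteq {\rm rec}(\cP_\sigma)$, so the condition ${\rm rec}(\cP_\sigma)\cap\R^I = \{0\}$ becomes simply ${\rm rec}(\cP_\sigma) = \{0\}$, which for a nonempty polyhedron means exactly that $\cP_\sigma$ is bounded. Hence $\deg(\inv_{[n]}(\cL))$ counts precisely the bounded regions of $(\cL^{\perp}+u)\backslash\{x_i=0\}_{i\in[n]}$.

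For $I$ independent in $M(\cL)$, I claim the condition holds for every $\sigma$, so that all regions are counted. Write $\cL$ as the rowspan of a $d\times n$ matrix $A = (a_1,\hdots,a_n)$, so that $\cL^{\perp} = \{v\in\R^n : \sum_i v_i a_i = 0\}$. Any $v\in{\rm rec}(\cP_\sigma)\cap\R^I$ lies in $\cL^{\perp}$ and is supported on $I$, hence gives a linear dependence among $\{a_i : i\in I\}$; since $I$ is independent, $v = 0$. (Independence of $I$ also guarantees $M(\cL)$ has no loop in $I$, so the earlier results apply.) Therefore every nonempty $\cP_\sigma$ contributes, and $\deg(\inv_I(\cL))$ equals the total number of regions of $(\cL^{\perp}+u)\backslash\{x_i=0\}_{i\in I}$.

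I do not expect a genuine obstacle here; the only points needing a little care are the inclusion ${\rm rec}(\cP_\sigma)\cap\R^I \subseteq \cL^{\perp}\cap\R^I$ and the standard dictionary between ``$\cL^{\perp}$ meets the coordinate subspace $\R^I$ only at the origin'' and the linear independence of $\{a_i : i\in I\}$, i.e.\ the independence of $I$ in $M(\cL)$.
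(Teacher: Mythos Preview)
Your proof is correct and follows essentially the same approach as the paper: both reduce each case to checking the recession-cone condition from the preceding proposition, with the $I=[n]$ case handled identically and the independent case handled by showing $\cL^{\perp}\cap\R^I=\{0\}$. The only cosmetic difference is that the paper verifies this last fact via matroid duality (extending $I$ to a basis $B$ of $M(\cL)$ so that $[n]\backslash B\subseteq [n]\backslash I$ is a basis of $M(\cL^{\perp})$), whereas you verify it directly from the description $\cL^{\perp}=\ker A$.
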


\begin{proof} 
If $I$ is independent in $M(\cL)$, then $I$ is contained in a basis $B$ of $M(\cL)$, and 
$[n]\backslash B$ is a basis of $M(\cL^{\perp})$ contained in $[n]\backslash I$.  
In particular, if $x\in \cL^{\perp}$ has $x_j = 0$ for all $j\in [n]\backslash I$, then $x =0$. 
So $\R^I\cap  \cL^{\perp} = \{0\}$.  The recession cone of any region of $(\cL^{\perp}+u)\backslash\{x_i=0\}_{i\in I}$
is contained in $\cL^{\perp}$, so its intersection with $\R^I$ is trivial. 

If $I = [n]$, then $\R^I = \R^n$. The recession cone of a region in $(\cL^{\perp}+u)\backslash \{x_i=0\}_{i\in I}$
contains a non-zero vector if and only if it is unbounded. Therefore the regions whose 
recession cones have trivial intersection with $\R^I$ are those which are bounded. 
\end{proof}

\begin{ex}\label{ex:NonGen2}
Consider the $3$-dimensional linear space $\cL$ from Example~\ref{ex:NonGen} and take the vector  
$u =(0,0,1,2,2)$. The two-dimensional affine space $\cL^{\perp}+ u$ consists of points 
of the form 
$(x_1, x_2,  x_1 - x_2+1, - x_2+2, - x_1 + x_2+2)$. 
Since $I = \{1,2,3\}$ is independent in $M(\cL)$, each of the seven regions in the complement of the hyperplane arrangement 
$\{x_i=0\}_{i\in I}$ contains a point of $\inv_I^-(\cL)$.
For $I = \{1,2,3,4\}$, there are four regions whose recession cones intersect $\{x_5=0\}$ nontrivially. 
The remaining six regions each contain a unique point in $\inv_I^-(\cL)$. 
Finally, for $I = \{1,2,3,4,5\}$, $\R^I$ is all of $\R^5$ so the recession cone of $\cP_{\sigma}$ intersects $\R^I$ 
nontrivially if and only if $\cP_{\sigma}$ is unbounded. Thus the four bounded regions 
of the hyperplane arrangement $\{x_i = 0\}_{i\in I}$ in $\cL^{\perp} + u$ are precisely those that 
contain points in $\inv_I^-(\cL)$.  These hyperplane arrangements and intersection points 
are shown in Figure~\ref{fig:Hyp}.    
\end{ex}

\begin{center}
\begin{figure}
\includegraphics[width=1.6in]{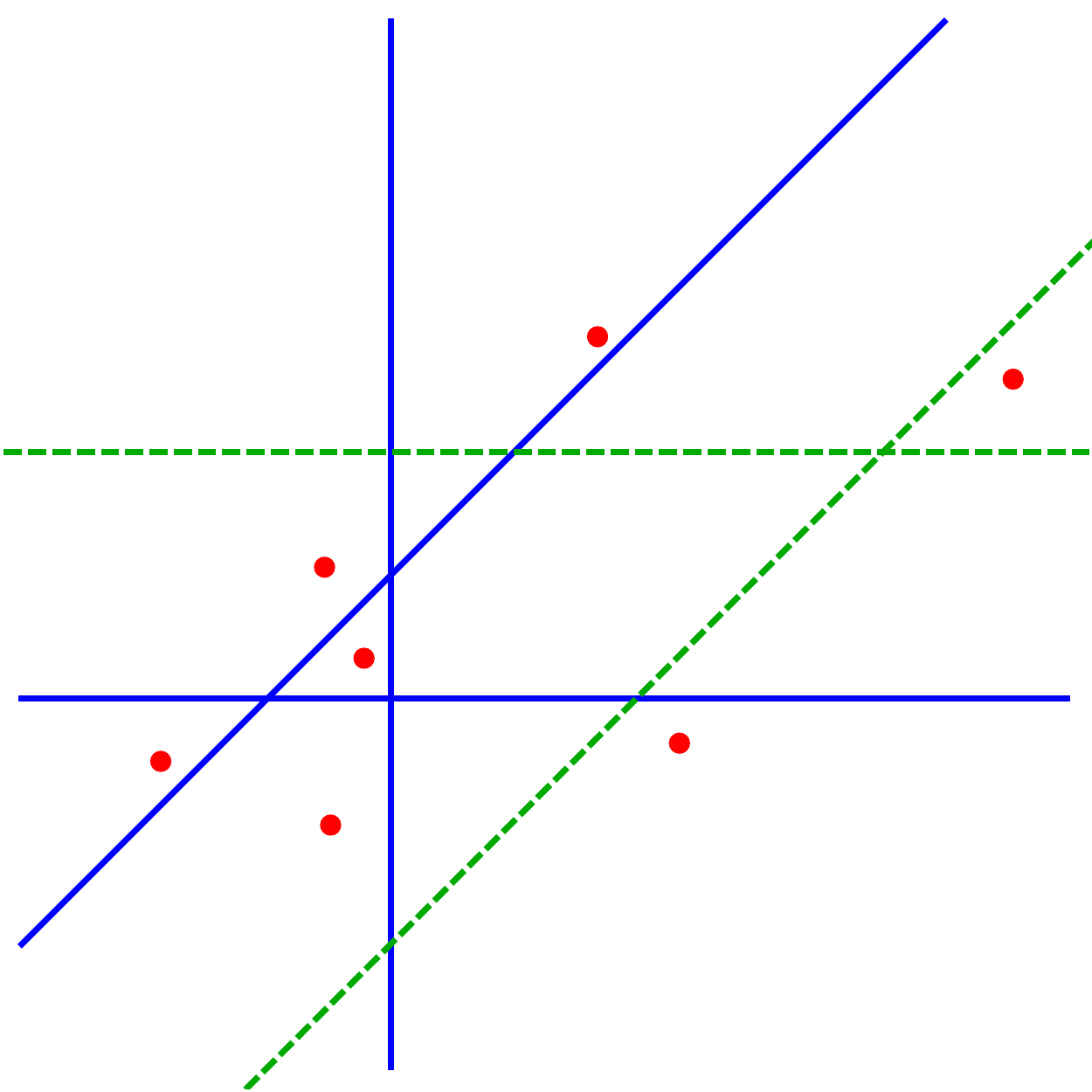} \hfill
\includegraphics[width=1.6in]{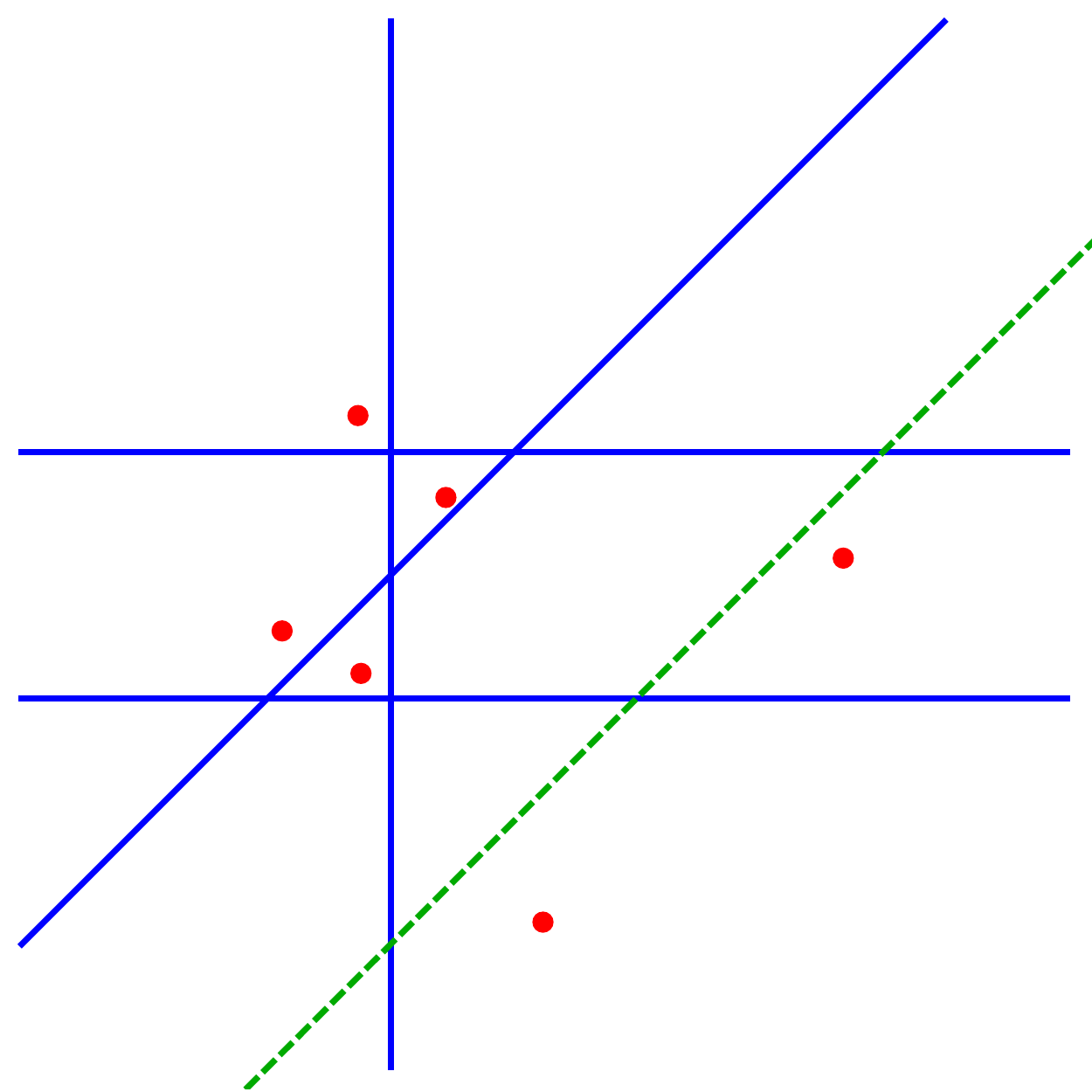} \hfill
\includegraphics[width=1.6in]{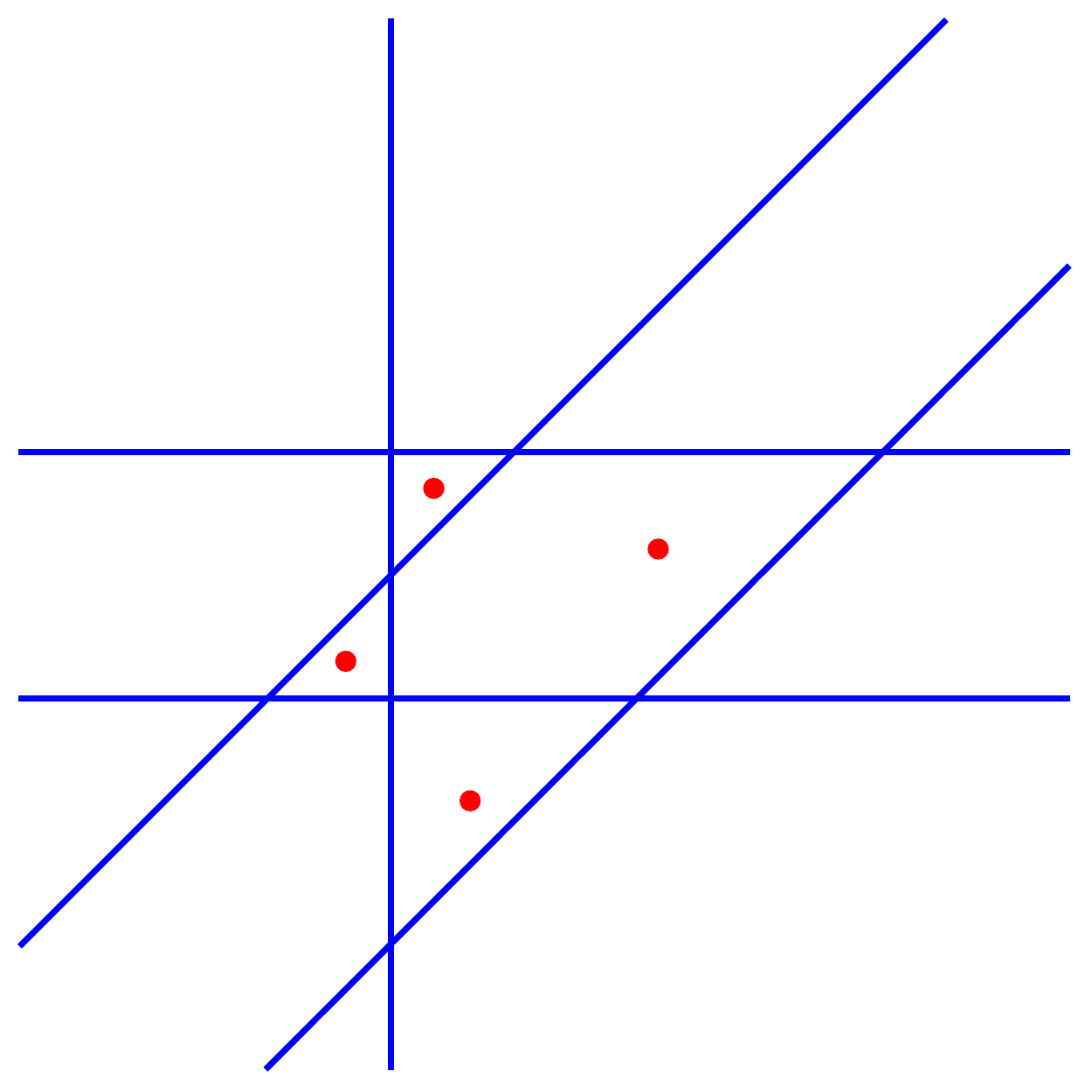}
\caption{Intersections of $\cL^{\perp} + u$ with $\inv_I^-(\cL)$ from Example~\ref{ex:NonGen2}.}
\label{fig:Hyp}
\end{figure}
\end{center}

%
%
%
%
%

\end{document}